\newcommand{\Zz}{\mathbb{Z}}
\newcommand{\Qq}{\mathbb{Q}}
\theoremstyle{plain}
\newtheorem{theorem}{Theorem}[section]    
\newtheorem{twisting lemma}[theorem]{Twisting lemma}
\newtheorem{lemma}[theorem]{Lemma}       
\newtheorem{proposition}[theorem]{Proposition}  
\theoremstyle{remark}
\newtheorem{definition}[theorem]{Definition}      
\newtheorem{remark}[theorem]{Remark}   
\def\cm{\hbox{\hbox{\rm C}\kern-5pt{\raise 1pt\hbox{$|$}}}}
\def\lhfl#1#2{\smash{\mathop{\hbox to 12mm{\leftarrowfill}}
\limits^{#1}_{#2}}}
\def\rhfl#1#2{\smash{\mathop{\hbox to 12mm{\rightarrowfill}}
\limits^{#1}_{#2}}}
\def\build#1_#2^#3{\mathrel{
\mathop{\kern 0pt#1}\limits_{#2}^{#3}}}
\def\htrait#1#2{\smash{\mathop{\hbox to 12mm{\hrulefill}}
\limits^{#1}_{#2}}}
\def\sxbullet{{\raise 2pt\hbox{\bf .}}}
\numberwithin{equation}{section}
\begin{document}

\title{Hilbert specialization results with local conditions}

\author{Fran\c cois Legrand}

\email{flegrand@post.tau.ac.il}

\address{School of Mathematical Sciences, Tel Aviv University, Ramat Aviv, Tel Aviv 6997801, Israel}

\address{Department of Mathematics and Computer Science, the Open University of Israel, Ra'anana 4353701, Israel}

\date{\today}

\maketitle

\begin{abstract}
Given a field $k$ of characteristic zero and an indeterminate $T$, the main topic of the paper is the construction of specializations of any given finite extension of $k(T)$ of degree $n$ that are degree $n$ field extensions of $k$ with specified local behavior at any given finite set of primes of $k$. First, we give a full non-Galois analog of a result with a ramified type conclusion from a preceding paper and next we prove a unifying statement which combines our results and previous work devoted to the unramified part of the problem in the case $k$ is a number field.
\end{abstract}

\section{Introduction}

Given a field $k$ of characteristic zero, an indeterminate $T$, a finite extension $E/k(T)$ of degree $n$ and a point $t_0 \in \mathbb{P}^1(k)$, not a branch point, the {\it{specialization of $E/k(T)$ at $t_0$}} is a {\it{$k$-\'etale algebra of degree $n$}}, {\it{i.e.}} a finite product $\prod_l F_l/k$ of finite extensions of $k$ such that $\sum_l [F_l:k]=n$. See \S2 for basic terminology. For example, if $E/k(T)$ is given by a monic irreducible (in $Y$) polynomial $P(T,Y)\in k[T][Y]$, it is the product of extensions of $k$ corresponding to the irreducible factors of $P(t_0,Y)$ (for all but finitely many $t_0\in k$).

The main topic of the paper is the construction of specialization points $t_0 \in \mathbb{P}^1(k)$ such that the following two conditions are satisfied:

\noindent
- the specialization of $E/k(T)$ at $t_0$ consists of a single degree $n$ field extension $E_{t_0}/k$,

\noindent
- the extension $E_{t_0}/k$ has specified local behavior (ramified or unramified) at any given finite set of primes of $k$. By ``with specified local behavior", we mean, in the case the extension $E/k(T)$ is Galois, with specified inertia groups or Frobenius and, in the general case, with specified ramification indices or residue degrees.

The unramified part of this problem is studied in \cite{DL12} for arbitrary finite extensions of $k(T)$ whereas the ramified one is studied in \cite{Leg13a} in the particular case where the extension $E/k(T)$ is {\it{$k$-regular}} ({\it{i.e.}} $E \cap \overline{k} = k$) and Galois. The aim of this paper consists first in handling the ramified case for arbitrary finite extensions of $k(T)$ and next in providing unifying results.

\vspace{2.3mm}

\noindent
1.1. {\bf{The ramified part of the problem}} should be studied within some classical limitations that we recall briefly below. We refer to \S3 for precise statements (the ``Specialization Inertia Theorem" and Proposition \ref{transition2}), more details and references.

Let $k$ be the quotient field of a Dedekind domain $A$ of characteristic zero, $E/k(T)$ a finite extension of degree $n$, $\widehat{E}/k(T)$ its Galois closure and $\{t_1,\dots,t_r\} \subset \mathbb{P}^1(\overline{k})$ its branch point set. Then

\noindent
- a necessary condition for a given prime $\mathcal{P}$ of $A$ (not in a certain finite list $\mathcal{S}_{\rm{exc}}$ depending on $E/k(T)$) to ramify in a specialization of $E/k(T)$ requires $\mathcal{P}$ to be a {\it{prime divisor}} of the minimal polynomial $m_{i_\mathcal{P}}(T)$ over $k$ of some branch point $t_{i_\mathcal{P}}$ (unique up to $k$-conjugation), {\it{i.e.}} $m_{i_\mathcal{P}}(t_{0,\mathcal{P}})$ has positive $\mathcal{P}$-adic valuation for some point $t_{0,\mathcal{P}} \in k$,

\noindent
- the inertia group at $\mathcal{P}$ of the specialization at a given point $t_0 \in \mathbb{P}^1(k) \setminus \{t_1,\dots,t_r\}$ of $\widehat{E}/k(T)$ is generated by a power $g_{i_{\mathcal{P}}}^{a_\mathcal{P}}$ (depending on $t_0$ and $t_{i_\mathcal{P}}$) of the {\it{distinguished generator}} $g_{i_\mathcal{P}}$ of the inertia group of $\widehat{E}\overline{k}/\overline{k}(T)$ at some prime lying over $(T-t_{i_\mathcal{P}}) \overline{k}[T-t_{i_\mathcal{P}}]$,

\noindent
- the set of all ramification indices at $\mathcal{P}$ of the specialization at $t_0$ of $E/k(T)$ is the set of all lengths of disjoint cycles involved in the cycle decomposition in $S_d$ of the image of $g_{i_{\mathcal{P}}}^{a_\mathcal{P}}$ {\it{via}} the action $\nu$ of  ${\rm{Gal}}(\widehat{E}\overline{k}/\overline{k}(T))$ on all $\overline{k}(T)$-embeddings of ${E} \overline{k}$ in a given algebraic closure of $\overline{k}(T)$ (with $d=[E \overline{k} : \overline{k}(T)]$).

In \cite{Leg13a}, we provide some converse to the Galois conclusion. Let $\mathcal{S}$ be a finite set of prime ideals $\mathcal{P}$ of $A$ which are not in the list $\mathcal{S}_{\rm{exc}}$, each given with a couple $(i_\mathcal{P},a_\mathcal{P})$ where $i_\mathcal{P}$ is an index in $\{1,\dots,r\}$ such that $\mathcal{P}$ is a prime divisor of $m_{i_\mathcal{P}}(T)$ and $a_\mathcal{P} \geq 1$ is an integer. Then

\noindent
- \cite[Corollary 3.3]{Leg13a} shows that, if $\widehat{E}/k(T)$ is $k$-regular\footnote{Recall that this extension is not $k$-regular in general, even if $E/k(T)$ is $k$-regular.} and $k$ is hilbertian\footnote{{\it{e.g.}} $k$ is a number field or a finite extension of a rational function field $\kappa(X)$ with $\kappa$ an arbitrary field and $X$ an indeterminate. See {\it{e.g.}} \cite{FJ08} for more on hilbertian fields.}, then, for infinitely many points $t_0 \in \mathbb{P}^1(k) \setminus \{t_1,\dots,t_r\}$, the specialization of $\widehat{E}/k(T)$ at $t_0$ has Galois group ${\rm{Gal}}(\widehat{E}/k(T))$ and inertia group at $\mathcal{P}$ generated by $g_{i_\mathcal{P}}^{a_\mathcal{P}}$ ($\mathcal{P} \in \mathcal{S}$),

\noindent
- Theorem \ref{Hilbert} in \S4 of this paper provides a full non-Galois analog:

\vspace{2mm}

\noindent
{\bf{Theorem 1.}} {\it{Assume that $k$ is hilbertian and continue with the data from above. Then, for infinitely many points $t_0 \in \mathbb{P}^1(k) \setminus \{t_1,\dots,t_r\}$,

\vspace{0.5mm}

\noindent
{\rm{(1)}} the specialization of $E/k(T)$ at $t_0$ consists of a single degree $n$ field extension $E_{t_0}/k$,

\vspace{0.5mm}

\noindent
{\rm{(2)}} the set of all ramification indices of $E_{t_0}/k$ at each $\mathcal{P} \in \mathcal{S}$ is the set of all lengths of disjoint cycles involved in the decomposition of $\nu(g_{i_{\mathcal{P}}}^{a_\mathcal{P}})$.}}

\vspace{2mm}

\noindent
Under some g-completeness hypothesis, the hilbertianity assumption can be relaxed; see Theorem \ref{gc}. Furthermore, our results show that \cite[Corollary 3.3]{Leg13a} still holds if $\widehat{E}/k(T)$ is not $k$-regular.

\vspace{3mm}

\noindent
1.2. {\bf{The mixed situation.}} In \S5, we prove a unifying result, in the number field case\footnote{The situation of a base field which is a rational function field $\kappa(X)$ with coefficients in a field $\kappa$ with suitable arithmetic properties (and $X$ an indeterminate) can also be considered. We refer to \cite[\S4]{DG11} for more on this case.}, which combines some of our results from \S4 and the results from the already alluded to previous work devoted to the unramified part of the problem. Theorem \ref{DGL} gives our precise result.

Moreover, in the special case $k=\mathbb{Q}$, explicit bounds on the discriminant of our specializations can be added to our conclusions; see \S5.3.1. For instance, we obtain the following result; see \S5.3.2.

\vspace{3mm}

\noindent
{\bf{Theorem 2.}} {\it{Let $E/\mathbb{Q}(T)$ be a finite extension of degree $n$ with at least one $\mathbb{Q}$-rational branch point and such that the Galois closure $\widehat{E}/\mathbb{Q}(T)$ is $\Qq$-regular. Let $G$ be the Galois group of $\widehat{E}/\mathbb{Q}(T)$. Then there exist three positive constants $m_0$, $\alpha$ and $ \beta$ (depending only on $E/\mathbb{Q}(T)$) that satisfy the following property. Given two disjoint finite sets $\mathcal{S}_{\rm{ur}}$ and $\mathcal{S}_{\rm{ra}}$ of prime numbers $p \geq m_0$, there exist rational numbers $t_0$ such that

\vspace{0.5mm}

\noindent
{\rm{(1)}} the specialization of $E/\mathbb{Q}(T)$ at $t_{0}$ consists of a single degree $n$ field extension $E_{t_{0}}/\mathbb{Q}$ and the Galois closure $\widehat{E_{t_0}}/\mathbb{Q}$ of $E_{t_0}/\Qq$ has Galois group $G$,

\vspace{0.5mm}

\noindent
{\rm{(2)}} no prime number $p \in \mathcal{S}_{\rm{ur}}$ ramifies in $E_{t_{0}}/\mathbb{Q}$,

\vspace{0.5mm}

\noindent
{\rm{(3)}} each prime number $p \in \mathcal{S}_{\rm{ra}}$ ramifies in $E_{t_{0}}/\mathbb{Q}$,

\vspace{0.5mm}

\noindent
{\rm{(4)}} the discriminant $d_{E_{t_0}}$ of $E_{t_0}/\mathbb{Q}$ satisfies $$\prod_{p \in \mathcal{S}_{\rm{ra}}} p \leq |d_{E_{t_0}}| \leq \alpha \prod_{p \in \mathcal{S}_{\rm{ur}} \cup \mathcal{S}_{\rm{ra}}} p^\beta.$$}}

\noindent
In addition to many $\Qq$-regular Galois extensions of $\mathbb{Q}(T)$ with various Galois groups ({\it{e.g.}} abelian groups of even order, symmetric groups, alternating groups, some other non-abelian simple groups (including the Monster group), {\it{etc.}}), several non-Galois finite extensions of $\mathbb{Q}(T)$ satisfy the assumptions of Theorem 2. For instance, given an integer $n \geq 3$, this is true of the finite extension of $\mathbb{Q}(T)$ generated by one root of the irreducible trinomial $Y^n-Y-T$ (in which case $G=S_n$) \cite[\S4.4]{Ser92}. See also \cite[\S2.4]{Sch00} (or Remark 4.4) for another examples with $G=S_n$ ($n \geq 3$) and \cite[\S4.5]{Ser92} for examples with $G=A_n$ ($n \geq 5$).

\vspace{2.5mm}

{\bf{Acknowledgments.}} The author wishes to Pierre D\`ebes for helpful discussions as well as the anonymous referee for his/her thorough work and many valuable comments on a preliminary version of the paper. This work is partially supported by the Israel Science Foundation (grants No. 40/14 and No. 696/13).

\section{Basics on finite extensions of $k(T)$}

Given a field $k$ of characteristic zero, fix an algebraic closure $\overline{k}$ of $k$.

Recall that a {\it{finite $k$-\'etale algebra}} $\prod_l F_l/k$ is a finite product of finite field extensions $F_l/k$. The integer $\sum_l [F_l:k]$ is the {\it{degree of $\prod_l F_l/k$}}.

\subsection{Generalities} 
Let $T$ be an indeterminate. A finite degree $n$ extension $E/k(T)$ is {\it{$k$-regular}} if $E \cap \overline{k}=k$. In general, there is some {\it{constant extension in $E/k(T)$}}, which we denote by ${k_E}/k$ and define by ${k_E}= E \cap \overline{k}$. Note that the extension $E/k_E(T)$ is $k_E$-regular. The special case ${k_E}=k$ corresponds to the situation $E/k(T)$ is $k$-regular.

Denote the Galois closure of $E/k(T)$ by $\widehat{E}/k(T)$. The Galois group ${\rm{Gal}}(\widehat{E}/k(T))$ is denoted by $G$ and called {\it{the Galois group of $E/k(T)$}}. Next, denote by $\widehat{E} \overline{k}$ the {\it{compositum}} of $\widehat{E}$ and $\overline{k}(T)$ (in a fixed algebraic closure of $k(T)$). The Galois group ${\rm{Gal}}(\widehat{E} \overline{k}/ \overline{k}(T))$ is denoted by $\overline{G}$ and called {\it{the geometric Galois group of $E/k(T)$}}; it is a normal subgroup of $G$ (these two groups coincide if and only if $\widehat{E}/k(T)$ is $k$-regular).

{\it{Via}} its action on all $\overline{k}(T)$-embeddings of ${E} \overline{k}$ in a given algebraic clo-sure of $\overline{k}(T)$,  $\overline{G}$ may be viewed as a subgroup of the permutation group of all these embeddings. Up to some labeling of them, it may be viewed as a subgroup of $S_d$ (with $d=[E \overline{k} : \overline{k}(T)]$). Denote the corresponding {\hbox{morphism by $\nu : \overline{G} \rightarrow S_d$ and call it the {\it{embedding morphism of $\overline{G}$ in $S_d$}}.}}

\subsection{Branch points}

Given $t_0 \in \mathbb{P}^1(\overline{k})$, denote the integral closure of $\overline{k}[T-t_0]$ in $\widehat{E} \overline{k}$ by $\overline{B}$ \footnote{Replace $T-t_0$ by $1/T$ if $t_0=\infty$.}. We say that $t_0$ is {\it{a branch point of $E/k(T)$}} if the prime ideal $(T-t_0) \, \overline{k}[T-t_0]$ ramifies in $\overline{B}$. The extension $E/k(T)$ has only finitely many branch points; their number is positive if and only if $\widehat{E}\overline{k} \not=\overline{k}(T)$. From now on, we assume that this last condition holds and denote the branch points by $t_1,\dots,t_r$ ($r \geq1$).

\subsection{Inertia canonical invariants} 

For each positive integer $n$, fix a primitive $n$-th root of unity $\zeta_n$. Assume that the system $\{\zeta_n\}_n$ is {\it{coherent}}, {\it{i.e.}} $\zeta_{nm}^n=\zeta_m$ for any positive integers $n$ and $m$.

To each $t_i$ can be associated a conjugacy class $C_i$ of $\overline G$, called the {\it{inertia canonical conjugacy class (associated with $t_i$)}}, in the following way. The inertia groups of the prime ideals lying over $(T-t_i) \, \overline{k}[T-t_i]$ in the extension $\widehat{E}\overline{k}/\overline{k}(T)$ are cyclic conjugate groups of order equal to the ramification index $e_i$. Furthermore, each of them has a distinguished generator corresponding to the automorphism $(T-t_i)^{1/e_i} \mapsto \zeta_{e_i} (T-t_i)^{1/e_i}$ of $\overline{k}(((T-t_i)^{1/e_i}))$. Then $C_i$ is the conjugacy class of all the distinguished generators of the inertia groups of the prime ideals lying over $(T-t_i) \, \overline{k}[T-t_i]$ in the extension $\widehat{E}\overline{k}/\overline{k}(T)$. The unordered $r$-tuple $({C_1},\dots,{C_r})$ is called {\it{the inertia canonical invariant of $\widehat{E}/k(T)$}}.

Denote by $C_i^{S_d}$ the conjugacy class of $S_d$ corresponding to $C_i$ {\it{via}} the embedding morphism $\nu : \overline{G} \rightarrow S_d$ ($i \in \{1,\dots,r\}$). The unordered $r$-tuple $(C_1^{S_d},\dots,C_r^{S_d})$ is called {\it{the inertia canonical invariant of $E/k(T)$}}.

\subsection{Specializations} Let $t_0 \in \mathbb{P}^1(k) \setminus \{t_1,\dots,t_r\}$.

\subsubsection{Galois case} The residue field at some prime ideal lying over $(T-t_0)k[T-t_0]$ in the extension $\widehat{E}/k(T)$ is denoted by $\widehat{E}_{t_0}$ and we call the extension $\widehat{E}_{t_0}/k$ {\it{the specialization of $\widehat{E}/k(T)$ at $t_0$}}. This does not depend on the choice of the prime ideal lying over $(T-t_0)k[T-t_0]$ since the extension $\widehat{E}/k(T)$ is Galois. The specialization of $\widehat{E}/k(T)$ at $t_0$ is a Galois extension of $k$ whose Galois group is a subgroup of $G$, namely the decomposition group of $\widehat{E}/k(T)$ at a prime ideal lying over $(T-t_0)k[T-t_0]$.

\subsubsection{General case} Denote the prime ideals lying over $(T-t_0)k[T-t_0]$ in the extension $E/k(T)$ by $\mathcal{P}_1, \dots, \mathcal{P}_s$. For each $l \in \{1,\dots,s\}$, the residue field at $\mathcal{P}_l$ is denoted by $E_{t_0,l}$ and the extension $E_{t_0,l}/k$ is called {\it{a specialization of $E/k(T)$ at $t_0$}}. The degree $n$ $k$-\'etale algebra $\prod_{l=1}^s E_{t_0,l}/k$ is called {\it{the specialization algebra of $E/k(T)$ at $t_0$}}. The {\it{compositum}} in $\overline k$ of the Galois closures of all specializations of $E/k(T)$ at $t_0$ is the specialization of the Galois closure $\widehat{E}/k(T)$ at $t_0$.

\vspace{2mm}

In the case the extension $E/k(T)$ is given by a polynomial $P(T,Y) \in k[T][Y]$, the following lemma is useful:

\begin{lemma} \label{spec non gal}
Let $P(T,Y) \in k[T][Y]$ be a monic (in $Y$) polynomial which is irreducible over $k(T)$ and such that $E$ is the field generated over $k(T)$ by one of its roots. Let $t_0 \in k$. Assume that $P(t_0,Y)$ is separable. Then the following two conditions hold.

\vspace{0.5mm}

\noindent
{\rm{(1)}} The point $t_0$ is not a branch point of $E/k(T)$.

\vspace{0.5mm}

\noindent
{\rm{(2)}} Consider the factorization $P(t_0,Y)=P_1(Y) \dots P_s(Y)$ of $P(t_0,Y)$ in irreducible polynomials $P_l(Y) \in k[Y]$ and denote the field generated over $k$ by one of the roots of $P_l(Y)$ by $F_l$ ($l \in \{1,\dots,s\}$). Then the specialization algebra of $E/k(T)$ at $t_0$ is the $k$-\'etale algebra $\prod_{l=1}^s F_l/k$.
\end{lemma}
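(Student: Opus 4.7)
The plan is to reduce both assertions to the Kummer--Dedekind analysis of the prime $(T-t_0)$ inside the order $k[T-t_0][y]$, where $y$ denotes a chosen root of $P(T,Y)$ generating $E$ over $k(T)$. The key input is that $P(t_0,Y)$ being separable is equivalent to the $Y$-discriminant $\Delta_Y(P)(T) \in k[T]$ of $P$ being nonzero at $t_0$.

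For (1), I would work over the complete local ring $\overline{k}[[T-t_0]]$. Since $P(t_0,Y)$ has $n = \deg_Y P$ distinct roots in $\overline{k}$, Hensel's lemma lifts each of them to a power series and yields a complete factorization $P(T,Y) = \prod_{i=1}^n (Y - y_i(T-t_0))$ in $\overline{k}[[T-t_0]][Y]$. Consequently, the integral closure of $\overline{k}[[T-t_0]]$ in $E\overline{k}\otimes_{\overline{k}(T)}\overline{k}((T-t_0))$ is a finite \'etale $\overline{k}[[T-t_0]]$-algebra, so $(T-t_0)\overline{k}[T-t_0]$ is unramified in $E\overline{k}/\overline{k}(T)$. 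Since unramifiedness at a given prime is preserved by passing to the Galois closure, the same holds in $\widehat{E}\overline{k}/\overline{k}(T)$, so $t_0$ is not a branch point of $E/k(T)$.

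For (2), the same non-vanishing of $\Delta_Y(P)$ at $t_0$ lets me invoke the classical fact that the inclusion $k[T-t_0][y] \hookrightarrow B$ of the naive order into the integral closure $B$ of $k[T-t_0]$ in $E$ becomes an equality after localizing at any prime not dividing $\Delta_Y(P)$, in particular at $(T-t_0)k[T-t_0]$. The Kummer--Dedekind theorem then puts the primes of $B$ above $(T-t_0)k[T-t_0]$ in bijection with the irreducible factors $P_l(Y)$ of $P(t_0,Y) \in k[Y]$, and identifies the residue field at the $l$-th such prime with $k[Y]/(P_l(Y)) \cong F_l$. This gives exactly the identification of the specialization algebra at $t_0$ with $\prod_{l=1}^{s} F_l/k$.

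No serious obstacle is expected: both conclusions are classical local consequences of the single hypothesis that the $Y$-discriminant does not vanish at $t_0$, handled respectively by Hensel's lemma applied over $\overline{k}[[T-t_0]]$ and by the Kummer--Dedekind theorem applied over the discrete valuation ring $k[T-t_0]_{(T-t_0)}$; the only point requiring a moment of care is verifying that the local factorization over $\overline{k}[[T-t_0]]$ actually forces unramifiedness in $\widehat{E}\overline{k}/\overline{k}(T)$, which follows once one remarks that Galois closures of unramified extensions remain unramified.
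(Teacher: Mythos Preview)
The paper states this lemma without proof, treating it as a classical fact. Your argument is correct and supplies exactly the standard justification one would expect: the hypothesis that $P(t_0,Y)$ is separable is equivalent to $\Delta_Y(P)(t_0)\neq 0$, and from there Hensel's lemma over $\overline{k}[[T-t_0]]$ handles part~(1) while the Kummer--Dedekind theorem over $k[T-t_0]_{(T-t_0)}$ handles part~(2).

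One small comment on part~(1): your passage from ``$(T-t_0)$ is unramified in $E\overline{k}/\overline{k}(T)$'' to ``$(T-t_0)$ is unramified in $\widehat{E}\overline{k}/\overline{k}(T)$'' is correct but deserves the one-line justification you allude to at the end. Concretely, $\widehat{E}\overline{k}$ is the compositum of the $\overline{k}(T)$-conjugates of $E\overline{k}$, each of which is unramified at $(T-t_0)$ by the same Hensel argument, and the compositum of extensions unramified at a prime remains unramified there (immediate in characteristic~$0$, e.g.\ via Abhyankar's lemma, which the paper itself invokes elsewhere). With that remark made explicit, the proof is complete.
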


\subsection{Notation}

The notation below will be used throughout the paper.

Let $A$ be a Dedekind domain of characteristic zero and $k$ its quotient field. We denote the Galois closure of a finite extension $F/k$ by $\widehat{F}/k$.

The minimal polynomial over $k$ of a given point $t \in \mathbb{P}^1(\overline{k})$ \footnote{Identify $\mathbb{P}^1(\overline{k})$ with $\overline{k} \cup \{\infty\}$.} is denoted by $m_{t}(T)$ (set $m_{t}(T)=1$ if $t = \infty$).

Let $E/k(T)$ be a finite extension of degree $n$ and $\widehat{E}/k(T)$ the Galois closure. Assume that $\widehat{E} \overline{k} \not=\overline{k}(T)$. Denote the branch point set by $\{t_1,\dots,t_r\}$, the constant extension in $\widehat{E}/k(T)$ by $k_{\widehat{E}}/k$, the Galois group ${\rm{Gal}}(\widehat{E}/k(T))$ by $G$, the geometric Galois group ${\rm{Gal}}(\widehat{E}\overline{k}/\overline{k}(T))$ by $\overline G$, the inertia canonical invariant of $\widehat{E}/k(T)$ by $({C_1}, \dots, {C_r})$, the degree of $E\overline{k}/\overline{k}(T)$ by $d$, the embedding morphism of $\overline{G}$ in $S_d$ by $\nu:\overline{G} \rightarrow S_d$ and the inertia canonical invariant of $E/k(T)$ by $(C_1^{S_d},\dots,C_r^{S_d})$. Finally, set $$m_{\bf{\underline{t}}}(T) = \prod_{i=1}^r m_{t_i}(T)$$ and, with $1/\infty = 0$ and $1 / 0 = \infty$, set $$m_{1/\bf{\underline{t}}}(T) = \prod_{i=1}^r m_{1/t_i}(T).$$

\section{General statements on ramification in specializations}

Below we complement some standard facts on the ramification in the specializations of the extension $E/k(T)$. Our goal is the Specialization Inertia Theorem which is a more precise version of two results of Beckmann \cite[Proposition 4.2 and Theorem 5.1]{Bec91}; see \S3.2.

\subsection{Preliminaries}

We need to recall some standard definitions.

Let $\mathcal{P}$ be a non-zero prime ideal of $A$. Denote the localization of $A$ at $\mathcal{P}$ by $A_\mathcal{P}$ and the valuation of $k$ corresponding to $\mathcal{P}$ by $v_\mathcal{P}$. We say that $\mathcal{P}$ ``unitizes" a point $t \in \mathbb{P}^1(\overline{k})$ if $t$ and $1/t$ are integral over $A_\mathcal{P}$, {\it{i.e.}} if $m_t(T)$ and $m_{1/t}(T)$ both have coefficients in $A_\mathcal{P}$.

Given a finite extension $F/k$ and a prime $\mathcal{P}_F$ of $F$ lying over $\mathcal{P}$, we denote the associated valuation of $F$ by $v_{\mathcal{P}_F}$.

\begin{definition} \label{rencontre} 
Given $t_0, t_1$ $\in \mathbb{P}^1(\overline{k})$, we say that {\it{$t_0$ and $t_1$ meet modulo $\mathcal{P}$}} if there exist a finite extension $F/k$ and a prime $\mathcal{P}_F$ of $F$ lying over $\mathcal{P}$ such that $t_0$, $t_1$ $\in \mathbb{P}^1(F)$ and one of these two conditions holds:

\vspace{0.5mm}

\noindent
(1) $v_{\mathcal{P}_F}(t_0) \geq 0$, $v_{\mathcal{P}_F}(t_1) \geq 0$ and $v_{\mathcal{P}_F}(t_0-t_1) > 0$,

\vspace{0.5mm}

\noindent
(2) $v_{\mathcal{P}_F}(t_0) \leq 0$, $v_{\mathcal{P}_F}(t_1) \leq 0$ and $v_{\mathcal{P}_F}((1/t_0) - (1/t_1)) > 0$ \footnote{Set $v_\mathcal{P}(\infty) = -\infty$ and $v_\mathcal{P}(0) = \infty$.}.
\end{definition}

\noindent
Note that Definition \ref{rencontre} does not depend on the choice of the finite extension $F/k$ such that $t_0, t_1 \in \mathbb{P}^1(F)$.

Let $t_1 \in \mathbb{P}^1(\overline{k})$. Assume that the constant coefficient $a_{t_1}$ of $m_{t_1}(T)$ satisfies $v_\mathcal{P}(a_{t_1})=0$ in the case $t_1 \not=0$ to make the intersection multiplicity well-defined in Definition \ref{**} below. Let $t_0 \in \mathbb{P}^1(k)$.

\begin{definition} \label{**}
{\it{The intersection multiplicity $I_{\mathcal{P}}(t_0,t_1)$ of $t_0$ and $t_1$ at $\mathcal{P}$}} is 
$I_\mathcal{P}(t_0,t_1)= \left \{ \begin{array} {ccc}
          v_{\mathcal{P}}(m_{t_1}(t_0)) & {\rm{if}} & v_\mathcal{P}(t_0) \geq 0, \\
          v_{\mathcal{P}}(m_{1/t_1}(1/t_0)) & {\rm{if}} &  v_\mathcal{P}(t_0) \leq 0. \\   
          \end{array} \right.$
\end{definition}

The following lemma, which is \cite[Lemma 2.5]{Leg13a}, will be used on several occasions in this paper.

\begin{lemma} \label{3.3}
{\rm{(1)}} If $I_\mathcal{P}(t_0,t_1) >0$, then $t_0$ and $t_1$ meet modulo $\mathcal{P}$.

\vspace{0.5mm}

\noindent
{\rm{(2)}} The converse holds if $\mathcal{P}$ unitizes $t_1$.
\end{lemma}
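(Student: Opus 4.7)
The plan is to work inside a fixed finite Galois extension $F/k$ containing $t_1$ together with all its $k$-conjugates, fix a prime $\mathcal{P}_F$ of $F$ lying above $\mathcal{P}$, and let $e$ denote its ramification index and $v_{\mathcal{P}_F} = e \cdot v_{\mathcal{P}}$ the associated valuation. The main algebraic input is the factorization $m_{t_1}(T) = \prod_{\sigma}(T - \sigma(t_1))$ over $F$ (and its analog for $m_{1/t_1}$), which yields $v_{\mathcal{P}_F}(m_{t_1}(t_0)) = \sum_{\sigma} v_{\mathcal{P}_F}(t_0 - \sigma(t_1))$. The two cases $v_{\mathcal{P}}(t_0) \geq 0$ and $v_{\mathcal{P}}(t_0) \leq 0$ appearing in Definitions \ref{rencontre} and \ref{**} are handled in parallel by interchanging $(t_0, t_1)$ with $(1/t_0, 1/t_1)$, so I treat only the first; the degenerate cases $t_1 \in \{0, \infty\}$ follow directly from the definitions.

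For part (1), suppose $I_{\mathcal{P}}(t_0, t_1) > 0$ with $v_{\mathcal{P}}(t_0) \geq 0$. Then $v_{\mathcal{P}_F}(m_{t_1}(t_0)) > 0$, so at least one factor satisfies $v_{\mathcal{P}_F}(t_0 - \sigma_0(t_1)) > 0$ for some $\sigma_0 \in {\rm Gal}(F/k)$. Using that $t_0 \in k$ is fixed by $\sigma_0^{-1}$ together with the standard identity $v_{\mathcal{P}_F}(\sigma(x)) = v_{\sigma^{-1}(\mathcal{P}_F)}(x)$ for any $\sigma \in {\rm Gal}(F/k)$, I rewrite $v_{\mathcal{P}_F}(t_0 - \sigma_0(t_1)) = v_{\mathcal{P}_F}(\sigma_0(t_0 - t_1)) = v_{\mathcal{Q}_F}(t_0 - t_1)$ with $\mathcal{Q}_F := \sigma_0^{-1}(\mathcal{P}_F)$, which is again a prime of $F$ above $\mathcal{P}$ since $\sigma_0$ fixes $k$. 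All primes of $F$ above $\mathcal{P}$ share the same ramification index, so $v_{\mathcal{Q}_F}(t_0) = e \cdot v_{\mathcal{P}}(t_0) \geq 0$; the ultrametric inequality then forces $v_{\mathcal{Q}_F}(t_1) \geq 0$, and condition (1) of Definition \ref{rencontre} is satisfied for $(F, \mathcal{Q}_F)$.

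For part (2), the unitization of $t_1$ by $\mathcal{P}$ implies $v_{\mathcal{P}_F}(\sigma(t_1)) = 0$ for every $\sigma$: the $A_{\mathcal{P}}$-integrality of the coefficients of $m_{t_1}$ yields $v_{\mathcal{P}_F}(\sigma(t_1)) \geq 0$, while that of $m_{1/t_1}$ yields $v_{\mathcal{P}_F}(\sigma(t_1)) \leq 0$. Assuming that $t_0$ and $t_1$ meet modulo $\mathcal{P}$ via condition (1) of Definition \ref{rencontre}, so $v_{\mathcal{P}_F}(t_0) \geq 0$ and $v_{\mathcal{P}_F}(t_0 - t_1) > 0$, every summand in $\sum_{\sigma} v_{\mathcal{P}_F}(t_0 - \sigma(t_1))$ is non-negative by the ultrametric inequality (combining $v_{\mathcal{P}_F}(\sigma(t_1)) = 0$ with $v_{\mathcal{P}_F}(t_0) \geq 0$), while the summand indexed by $\sigma = {\rm id}$ is strictly positive. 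Dividing by $e$ gives $v_{\mathcal{P}}(m_{t_1}(t_0)) > 0$, i.e.\ $I_{\mathcal{P}}(t_0, t_1) > 0$. The case of meeting via condition (2) is handled by the symmetric argument applied to $1/t_0$, $1/t_1$ and $m_{1/t_1}$.

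I expect the main obstacle to be purely organizational: one must consistently track which sub-case of Definitions \ref{rencontre} and \ref{**} is in play, and make clear why the unitization hypothesis in (2) is essential---without it, a conjugate $\sigma(t_1)$ with strictly negative $v_{\mathcal{P}_F}$-valuation could contribute a very negative factor that cancels the strictly positive contribution of $\sigma = {\rm id}$, so the converse would fail.
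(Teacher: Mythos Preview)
The paper does not supply its own proof of this lemma: it simply quotes it as \cite[Lemma 2.5]{Leg13a}. So there is no in-paper argument to compare against. Your argument is correct and is the natural one: pass to a Galois extension $F/k$ containing all $k$-conjugates of $t_1$, factor $m_{t_1}(t_0)=\prod_\sigma (t_0-\sigma(t_1))$, and use the Galois action on primes above $\mathcal{P}$ together with the ultrametric inequality.

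Two very minor points of presentation, neither a real gap. First, in part (2) you implicitly identify the prime $\mathcal{P}_F$ you fixed at the outset with the prime supplied by Definition~\ref{rencontre}; since $F/k$ is Galois these primes are conjugate, and either you replace your fixed $\mathcal{P}_F$ by the one from the definition (the sum $\sum_\sigma v_{\mathcal{P}_F}(t_0-\sigma(t_1))=e\cdot v_{\mathcal{P}}(m_{t_1}(t_0))$ is the same for every prime above $\mathcal{P}$) or you transport by the Galois action as you did in part (1). Second, your closing remark on why unitization is needed in (2) is exactly right and worth keeping: without it a conjugate with negative valuation could make the sum $\leq 0$ even though one term is positive.
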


\begin{definition} \label{bon premier}
We say that the prime ideal $\mathcal{P}$ of $A$ is {\it{a bad prime for $E/k(T)$}} if $\mathcal{P}$ is one of the finitely many prime ideals of $A$ satisfying at least one of the following conditions:

\vspace{0.25mm}

\noindent
(1) $|\overline G| \in \mathcal{P}$,

\vspace{0.25mm}

\noindent
(2) two distinct branch points meet modulo $\mathcal{P}$,

\noindent
(3) $\widehat E/k(T)$ has {\it{vertical ramification}} at $\mathcal{P}$, {\it{i.e.}} the prime $\mathcal{P} A[T]$ of $A[T]$ ramifies in the integral closure of $A[T]$ in $\widehat E $,

\vspace{0.25mm}

\noindent
(4) $\mathcal{P}$ ramifies in the finite extension $k_{\widehat{E}}(t_1,\dots,t_r)/k$.

\vspace{0.25mm}

\noindent
Otherwise $\mathcal{P}$ is called {\it{a good prime for $E/k(T)$}}.
\end{definition}

Another goal of this section is Proposition \ref{transition2} (see \S3.3) which provides an explicit characterization of all primes of $k$ which ramify in a specialization of $E/k(T)$. We will need the following definition.

\begin{definition}
We say that the prime ideal $\mathcal{P}$ of $A$ is {\it{a prime divisor of a polynomial $P(T) \in k[T] \setminus k$}} if $v_{\mathcal{P}}(P(t_0)) >0$ for some $t_0 \in k$. 
\end{definition}

\subsection{Statement of the Specialization Inertia Theorem}

\noindent
\vspace{1.9mm}

\noindent
{\bf{Specialization Inertia Theorem.}} {\it{Let $t_0 \in \mathbb{P}^1(k) \setminus \{t_1,\dots,t_r\}$.

\vspace{0.5mm}

\noindent
{\rm{(1)}} If $\mathcal{P}$ ramifies in a specialization of $E/k(T)$ at $t_0$, then $\widehat E /k(T)$ has vertical ramification at $\mathcal{P}$ or $\mathcal{P}$ ramifies in the constant extension $k_{\widehat{E}}/k$ or $t_0$ meets some branch point modulo $\mathcal{P}$.

\vspace{0.5mm}

\noindent
{\rm{(2)}} Let $j \in \{1,\dots,r\}$. Assume that $\mathcal{P}$ is a good prime for ${E}/k(T)$ unitizing $t_j$ and $t_0, t_j$ meet modulo $\mathcal{P}$ \footnote{If $\mathcal{P}$ does not satisfy condition (2) of Definition \ref{bon premier}, then there is at most one index $j \in \{1,\dots,r\}$ (up to $k$-conjugation) such that $t_0$ and $t_j$ meet modulo $\mathcal{P}$.}. Then the following holds.

{\rm{(a)}} {\hbox{The inertia group at $\mathcal{P}$ of the specialization $\widehat{E}_{t_0}/k$ of $\widehat{E}/k(T)$ at}} 

\hspace{1mm}$t_0$ is generated by some element of $C_j^{I_\mathcal{P}(t_0,t_j)}(=\{g_j^{I_\mathcal{P}(t_0,t_j)} : g_j \in C_j \})$.

{\rm{(b)}} Assume that ${\rm{Gal}}(\widehat{E}_{t_0}/k_{\widehat{E}})= \overline{G}$. Then the set of all ramification

indices at $\mathcal{P}$ of a given specialization $E_{t_0,l}/k$ of $E/k(T)$ at $t_0$ is the

set of all lengths of disjoint cycles involved in the cycle decomposition

in $S_d$ of any element of $(C_j^{S_d})^{I_\mathcal{P}(t_0,t_j)} $.}}

\vspace{2mm}

\noindent
As already said, the result is a more precise version of two previous results of Beckmann. As a few gaps seem to appear in the original proofs, we have added some extra assumptions above. For the convenience of the reader, we offer a corrected proof in \S3.4, based on a version of the Specialization Inertia Theorem for $k$-regular Galois extensions of $k(T)$ from \cite[\S2.2.3]{Leg13a} (recalled as Theorem \ref{SIT 1} in \S3.4).

\subsection{Statement of Proposition \ref{transition2}}

\begin{proposition} \label{transition2}
Assume that $\mathcal{P}$ is a good prime for $E/k(T)$ unitizing each branch point. Then the following two conditions are equivalent:

\vspace{0.5mm}

\noindent
{\rm{(1)}} $\mathcal{P}$ ramifies in a specialization of $E/k(T)$,

\vspace{0.5mm}

\noindent
{\rm{(2)}} $\mathcal{P}$ is a prime divisor of $m_{\bf{\underline{t}}}(T) \cdot m_{1/\bf{\underline{t}}}(T)$.
\end{proposition}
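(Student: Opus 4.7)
The proof will run the Specialization Inertia Theorem in both directions, with Lemma \ref{3.3} serving as the bridge between ``meeting modulo $\mathcal{P}$'' and positivity of the intersection multiplicity.

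For $(1)\Rightarrow(2)$: assume $\mathcal{P}$ ramifies in some specialization $E_{t_0,l}/k$ at a point $t_0\in\mathbb{P}^1(k)\setminus\{t_1,\dots,t_r\}$. Since unramifiedness passes to Galois closures, and compositum of extensions unramified at $\mathcal{P}$ is again unramified at $\mathcal{P}$, it follows that $\mathcal{P}$ ramifies in $\widehat{E}_{t_0}/k$. The good-prime hypothesis rules out the vertical-ramification and constant-field alternatives in Part (1) of the Specialization Inertia Theorem, so $t_0$ meets some branch point $t_j$ modulo $\mathcal{P}$; combining the unitizing hypothesis with Lemma \ref{3.3}(2) then gives $I_\mathcal{P}(t_0,t_j)>0$. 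Unwinding Definition \ref{**}, this means $v_\mathcal{P}(m_{t_j}(t_0))>0$ or $v_\mathcal{P}(m_{1/t_j}(1/t_0))>0$, exhibiting $\mathcal{P}$ as a prime divisor of $m_{\underline{t}}(T)\cdot m_{1/\underline{t}}(T)$.

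For $(2)\Rightarrow(1)$: pick $s_0\in k$ and an index $j$ with $v_\mathcal{P}(m_{t_j}(s_0))>0$ or $v_\mathcal{P}(m_{1/t_j}(s_0))>0$. Since $\mathcal{P}$ unitizes $t_j$, both polynomials are monic with coefficients in $A_\mathcal{P}$, so comparing valuations with the leading term excludes $v_\mathcal{P}(s_0)<0$ in each case; this produces some $t_0\in\mathbb{P}^1(k)$ (namely $t_0=s_0$, $t_0=1/s_0$, or $t_0=\infty$ when $s_0=0$ in the second case) with $I_\mathcal{P}(t_0,t_j)>0$. By Part (2)(a) of the Specialization Inertia Theorem, the inertia of $\widehat{E}_{t_0}/k$ at $\mathcal{P}$ is generated by $g_j^{I_\mathcal{P}(t_0,t_j)}$, which a priori could be trivial when $e_j\mid I_\mathcal{P}(t_0,t_j)$. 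To rule this out, perturb $t_0$ by a uniformizer $\pi$ of $\mathcal{P}A_\mathcal{P}$ (or its analogue in the local coordinate at $\infty$), replacing $t_0$ by $t_0':=t_0+\pi$. Factoring $m_{t_j}(T)=\prod_i(T-\sigma_i(t_j))$ and using the good-prime condition (no two distinct branch points meet modulo $\mathcal{P}$) shows that exactly one factor contributes a positive term to $v_\mathcal{P}(m_{t_j}(t_0))$; a direct valuation count then gives $I_\mathcal{P}(t_0',t_j)=1$, and automatically $t_0'\notin\{t_1,\dots,t_r\}$. Part (2)(a) now yields an inertia generated by an element of $C_j$, a conjugacy class of elements of order $e_j\geq 2$, hence nontrivial. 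The Galois-closure remark from the first direction, applied contrapositively, transfers this ramification to some specialization $E_{t_0',l}/k$, giving $(1)$.

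The main obstacle is the last step of $(2)\Rightarrow(1)$: merely producing a $t_0$ meeting $t_j$ modulo $\mathcal{P}$ is insufficient, because the resulting power of $g_j$ might be trivial and $t_0$ might happen to be a branch point. The unitizing and good-prime hypotheses are precisely what make the single-step uniformizer perturbation succeed, by isolating a unique ``relevant'' conjugate of $t_j$ and preventing any accidental coincidences among the $\sigma_i(t_j)$'s or with other branch points.
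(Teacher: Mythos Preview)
Your proof is correct and follows essentially the same route as the paper: use part (1) of the Specialization Inertia Theorem plus Lemma \ref{3.3} for $(1)\Rightarrow(2)$, and for $(2)\Rightarrow(1)$ perturb by a uniformizer to force the intersection multiplicity down to $1$, then apply part (2)(a) and pass from $\widehat{E}_{t_0}/k$ back to some $E_{t_0,l}/k$ via the compositum/Galois-closure argument (what the paper phrases as Abhyankar's lemma).

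The only technical difference worth noting is in how the perturbation is justified. The paper uses the Taylor expansion $m_{t_i}(t_0+x_{\mathcal{P}}) = m_{t_i}(t_0) + x_{\mathcal{P}}\,m'_{t_i}(t_0) + x_{\mathcal{P}}^2 R_{\mathcal{P}}$ together with \cite[Lemma 2.8]{Leg13a}, which asserts $v_{\mathcal{P}}(m'_{t_i}(t_0))=0$, to conclude that one of $t_0$, $t_0+x_{\mathcal{P}}$ has $I_{\mathcal{P}}=1$. You instead factor $m_{t_j}$ over its splitting field and invoke the good-prime condition directly to see that exactly one conjugate of $t_j$ contributes to $v_{\mathcal{P}}(m_{t_j}(t_0))$; this is precisely the content of the cited lemma, so your argument is a self-contained rederivation of it. Your version also only perturbs when $e_j\mid I_{\mathcal{P}}(t_0,t_j)$ rather than always aiming for $I_{\mathcal{P}}=1$, which is a harmless economy. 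One small point of care: you invoke part (2)(a) on $t_0$ before verifying $t_0\notin\{t_1,\dots,t_r\}$; this is fine because if $t_0$ were a $k$-rational branch point other than $t_j$ it would meet $t_j$ modulo $\mathcal{P}$ (contradicting good prime), and if $t_0=t_j$ then $I_{\mathcal{P}}=\infty$ is divisible by $e_j$, so you perturb anyway---but it would be cleaner to say this explicitly.
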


\begin{proof}
The proof rests on the proof of \cite[Corollary 2.12]{Leg13a}, which is Proposition \ref{transition2} for $k$-regular Galois extensions of $k(T)$. We reproduce it below with the necessary adjustments for the bigger generality.

First, assume that $\mathcal{P}$ ramifies in a specialization of $E/k(T)$ at $t_0$ for some $t_0 \in \mathbb{P}^1(k)$. Suppose $v_\mathcal{P}(t_0) \geq 0$ (the other case for which $v_\mathcal{P}(t_0) <0$ is similar). By part (1) of the Specialization Inertia Theorem and as $\mathcal{P}$ is a good prime for $E/k(T)$, $t_0$ meets some branch point $t_i$ modulo $\mathcal{P}$. As $\mathcal{P}$ unitizes $t_i$, we may apply Lemma \ref{3.3} to get $v_\mathcal{P}(m_{t_i}(t_0)) >0$ (as $v_\mathcal{P}(t_0) \geq 0$). But $m_{t_1}(T), \dots,m_{t_r}(T), m_{1/t_1}(T), \dots,$ $m_{1/t_r}(T) \in A_\mathcal{P}[T]$ and $t_0 \in A_\mathcal{P}$. Then $v_\mathcal{P}(m_{\bf{\underline{t}}}(t_0) \cdot m_{1/\bf{\underline{t}}}(t_0)) >0$, as needed for (2).

Now, assume that condition (2) holds. Fix $t_0 \in k$ such that $v_\mathcal{P}(m_{\bf{\underline{t}}}(t_0) \cdot m_{1/\bf{\underline{t}}}(t_0)) >0$. As $m_{\bf{\underline{t}}}(T) \cdot m_{1/\bf{\underline{t}}}(T)$ has coefficients in $A_\mathcal{P}$ and is monic, one has $v_\mathcal{P}(t_0) \geq 0$. Assume that $v_\mathcal{P}(m_{\bf{\underline{t}}}(t_0)) >0$ (the other case for which $v_\mathcal{P}(m_{1/\bf{\underline{t}}}(t_0)) >0$ is similar). Then one has $v_\mathcal{P}(m_{t_i}(t_0)) >0$ for some $i \in \{1,\dots,r\}$. Let $x_\mathcal{P}$ be a generator of the maximal ideal $\mathcal{P} A_\mathcal{P}$ of $A_\mathcal{P}$. We claim that $v_\mathcal{P}(m_{t_i}(t_0)) =1$ or $v_\mathcal{P}(m_{t_i}(t_0 + x_\mathcal{P})) =1$, {\it{i.e.}} $I_{\mathcal{P}}(t_0,t_i) =1$ or $I_{\mathcal{P}}(t_0 + x_\mathcal{P},t_i) =1$ (as $v_\mathcal{P}(t_0) \geq 0$). Indeed, if $v_\mathcal{P}(m_{t_i}(t_0)) =1$, we are done. Then we may assume 
\begin{equation} \label{1}
v_\mathcal{P}(m_{t_i}(t_0)) \geq 2.
\end{equation}
By the Taylor formula, one has
\begin{equation} \label{2}
m_{t_i}(t_0 + x_\mathcal{P}) = m_{t_i}(t_0) + x_\mathcal{P} \cdot m'_{t_i}(t_0) + x_\mathcal{P}^2 \cdot R_\mathcal{P}
\end{equation}
for some $R_\mathcal{P} \in A_\mathcal{P}$ and, by \cite[Lemma 2.8]{Leg13a}, one has
\begin{equation} \label{3}
v_\mathcal{P}(m'_{t_i}(t_0))=0.
\end{equation}
Combining \eqref{1}, \eqref{2} and \eqref{3} provides $v_\mathcal{P}(m_{t_i}(t_0 + x_\mathcal{P}))=1$, thus proving our claim. Then we may apply Lemma \ref{3.3} to get that either $t_0$ and $t_i$ meet modulo $\mathcal{P}$ or $t_0 + x_\mathcal{P}$ and $t_i$ meet modulo $\mathcal{P}$. Moreover, we may assume that neither $t_0$ nor $t_0 + x_\mathcal{P}$ is a branch point. Then we may apply part (2)(a) of the Specialization Inertia Theorem to get that the inertia group at $\mathcal{P}$ of $\widehat{E}_{t_0}/k$ or of $\widehat{E}_{t_0+x_\mathcal{P}}/k$ is generated by an element of $C_i$. In particular, $\mathcal{P}$ ramifies in $\widehat{E}_{t_0}/k$ or in $\widehat{E}_{t_0 +x _\mathcal{P}}/k$. As a prime ideal of $A$ ramifies in the {\it{compositum}} of finitely many extensions of $k$ if and only if it ramifies in at least one of them (Abhyankar's lemma), we get that $\mathcal{P}$ ramifies in a specialization of $E/k(T)$ at $t_0$ or in a specialization of $E/k(T)$ at $t_0+x_\mathcal{P}$, as needed for condition (1).
\end{proof}

\subsection{A proof of the Specialization Inertia Theorem}

First of all, let us state Theorem \ref{SIT 1} below which is the Specialization Inertia Theorem for $k$-regular Galois extensions of $k(T)$.

\begin{theorem} \label{SIT 1}
Assume that the extension $E/k(T)$ is $k$-regular and Galois. Let $t_0 \in \mathbb{P}^1(k) \setminus \{t_1,\dots,t_r\}$.

\vspace{0.5mm}

\noindent
{\rm{(1)}} If $\mathcal{P}$ ramifies in $E_{t_0}/k$, then either $E /k(T)$ has vertical ramification at $\mathcal{P}$ or $t_0$ meets some branch point modulo $\mathcal{P}$.

\vspace{0.5mm}

\noindent
{\rm{(2)}} Let $j \in \{1,\dots,r\}$. Assume that $\mathcal{P}$ is a good prime for ${E}/k(T)$ unitizing $t_j$ and $t_0, t_j$ meet modulo $\mathcal{P}$. Then the inertia group at $\mathcal{P}$ of $E_{t_0}/k$ is generated by an element of $C_j^{I_\mathcal{P}(t_0,t_j)}$.
\end{theorem}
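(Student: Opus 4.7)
The plan is to work locally at $\mathcal{P}$, reducing to the strict henselization $A^{\mathrm{sh}}$ of $A_\mathcal{P}$, inside which the branch points $t_1,\dots,t_r$ are rational, no two of them collide (a consequence of the good-prime hypothesis of part~(2)), and each $\zeta_{e_i}$ is defined and unramified over $A_\mathcal{P}$. Ramification at $\mathcal{P}$ in $E_{t_0}/k$ is detected after completion at $\mathcal{P}$, and the good-prime hypothesis of part~(2) is exactly what legitimizes this passage without introducing spurious ramification along the way.

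For part (1), let $B$ denote the integral closure of $A[T]$ in $E$. The specialization $E_{t_0}/k$ is the residue field of $B$ at a prime lying above the maximal ideal $(T-t_0,\mathcal{P})$ of $A[T]$, and ramification at $\mathcal{P}$ corresponds to nontrivial inertia of such a prime in $B/A[T]$. If $E/k(T)$ has no vertical ramification at $\mathcal{P}$, then $\Spec(B)\to\Aa^1_{A_\mathcal{P}}$ is \'etale outside the Zariski closures of the horizontal branch divisors $\{T=t_i\}$; the section $T\mapsto t_0$ can therefore acquire ramification at $\mathcal{P}$ only where its special fiber meets one of those horizontal divisors, which happens precisely when $t_0$ and $t_i$ meet modulo $\mathcal{P}$ for some $i$.

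For part (2), pass to $A^{\mathrm{sh}}$. By tameness (ensured by $|\overline{G}|\notin\mathcal{P}$), the local structure of $E/k(T)$ at the prime above $(T-t_j)$ is the Kummer extension adjoining $(T-t_j)^{1/e_j}$, and the distinguished generator acts as $g_j\colon(T-t_j)^{1/e_j}\mapsto\zeta_{e_j}(T-t_j)^{1/e_j}$. Specializing $T\mapsto t_0$ replaces $T-t_j$ by $t_0-t_j$, whose $\mathcal{P}$-adic valuation (computed over $A^{\mathrm{sh}}$) equals $I_\mathcal{P}(t_0,t_j)$ by Lemma~\ref{3.3} and Definition~\ref{**}, thanks to the unitizing assumption and the non-collision of branch points. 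Writing $t_0-t_j=u\,\pi^{I_\mathcal{P}(t_0,t_j)}$ for a uniformizer $\pi$ of $A^{\mathrm{sh}}$ and a unit $u$, the pulled-back local extension is---up to the unramified Kummer twist by $u^{1/e_j}$---generated by $\pi^{I_\mathcal{P}(t_0,t_j)/e_j}$; its inertia is cyclic of order $e_j/\gcd(e_j,I_\mathcal{P}(t_0,t_j))$ and, under the identification of local Galois groups, is generated by $g_j^{I_\mathcal{P}(t_0,t_j)}$.

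The main obstacle is the descent from $A^{\mathrm{sh}}$ back to $A_\mathcal{P}$: different choices of the prime of $B$ above $(T-t_0,\mathcal{P})$ yield conjugate inertia subgroups, and the identification of the generator with $g_j^{I_\mathcal{P}(t_0,t_j)}$ holds only up to $G$-conjugation (and conjugation by $\mathrm{Gal}(A^{\mathrm{sh}}/A_\mathcal{P})$ acting on the local data). This is precisely why the theorem concludes with an element of the class $C_j^{I_\mathcal{P}(t_0,t_j)}$ rather than with $g_j^{I_\mathcal{P}(t_0,t_j)}$ itself; the remaining good-prime conditions---unramifiedness of $\mathcal{P}$ in $k_{\widehat{E}}(t_1,\dots,t_r)/k$ and non-collision of branch points---ensure that the geometric class $C_j$ is preserved by the arithmetic Frobenius at $\mathcal{P}$, so that no further refinement is required.
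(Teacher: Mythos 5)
The paper does not actually prove Theorem \ref{SIT 1}: it states it as a refinement of Beckmann's Proposition 4.2 and refers to \cite[\S1.2.1.4]{Leg13c} for the proof, so there is no in-paper argument to compare yours against line by line. Your sketch does follow the expected Beckmann-style route (two-dimensional model over $A_\mathcal{P}$, horizontal versus vertical components of the branch divisor for part (1); tame Kummer local structure at the crossing of $\{T=t_j\}$ with the fibre at $\mathcal{P}$, then specialization of the section $T\mapsto t_0$ for part (2)), and part (1) is essentially correct modulo the routine identification of ramification of $\mathcal{P}$ in $E_{t_0}/k$ with inertia in $B$ above $(T-t_0,\mathcal{P})$.

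In part (2), however, the two points that carry the actual content are asserted rather than proved. First, the equality of the valuation of $t_0-t_j$ (computed over $A^{\mathrm{sh}}$, or over a prime $\mathcal{Q}$ of $k(t_1,\dots,t_r)$ above $\mathcal{P}$) with $I_\mathcal{P}(t_0,t_j)=v_\mathcal{P}(m_{t_j}(t_0))$ is not given by Lemma \ref{3.3}, which only yields positivity: when $t_j$ is not $k$-rational, $m_{t_j}(t_0)$ is a product over all conjugates $t_{j'}$ of $t_j$, and one must show $v_\mathcal{Q}(t_0-t_{j'})=0$ for $t_{j'}\neq t_j$ using the non-collision of branch points and the unramifiedness of $\mathcal{P}$ in $k(t_1,\dots,t_r)/k$; this is exactly the computation the paper carries out around \eqref{8} in \S3.4.2 when deducing the general Specialization Inertia Theorem from Theorem \ref{SIT 1}, and it is missing here. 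Second, the conclusion ``generated by an element of $C_j^{I_\mathcal{P}(t_0,t_j)}$'' is strictly stronger than ``conjugate to $\langle g_j^{I_\mathcal{P}(t_0,t_j)}\rangle$'' (an element $g_j^{mI_\mathcal{P}(t_0,t_j)}$ with $m$ prime to $e_j$ generates the same subgroup but need not lie in $C_j^{I_\mathcal{P}(t_0,t_j)}$), so the matching of the distinguished generator of the specialized inertia group with $g_j^{I_\mathcal{P}(t_0,t_j)}$ is the crux; your one-sentence appeal to ``the geometric class $C_j$ being preserved by the arithmetic Frobenius'' is not the right mechanism. What is needed is the compatibility of the coherent system $\{\zeta_n\}$ with its reduction modulo a prime above $\mathcal{P}$ (legitimate since $p\nmid e_j$ by condition (1) of Definition \ref{bon premier}), so that the Kummer computation from $t_0-t_j=u\pi^{I_\mathcal{P}(t_0,t_j)}$ identifies the distinguished generator on the nose and not merely up to a prime-to-$e_j$ power. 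As written, the proposal establishes at best that the inertia subgroup is conjugate to $\langle g_j^{I_\mathcal{P}(t_0,t_j)}\rangle$.
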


\noindent
As already said in \cite[\S2.2.3]{Leg13a}, Theorem \ref{SIT 1} is a version of \cite[Proposition 4.2]{Bec91} with less restrictive hypotheses. However, we have added condition (4) in Definition \ref{bon premier}, which here requires the prime ideal $\mathcal{P}$ not to ramify in the extension $k(t_1,\dots,t_r)/k$, to close a gap in the original proof. We refer to \cite[Remark 2.7]{Leg13a} for more details and \cite[\S1.2.1.4]{Leg13c} for a proof of Theorem \ref{SIT 1}.

\vspace{2mm}

Now, we explain how to obtain the general version of the Specialization Inertia Theorem from Theorem \ref{SIT 1}. In particular, we make use of our extra assumption ${\rm{Gal}}(\widehat{E}_{t_0}/k_{\widehat{E}})= \overline{G}$ in part (2)(b) to close some gaps in the proof of \cite[Theorem 5.1]{Bec91}.

\subsubsection{Proof of part (1)}

Assume that $\mathcal{P}$ ramifies in a specialization of $E/k(T)$ at $t_0$. Then $\mathcal{P}$ ramifies in the specialization $\widehat{E}_{t_0}/k$ of the Galois closure $\widehat{E}/k(T)$ at $t_0$. If $\mathcal{P}$ ramifies in the subextension $k_{\widehat{E}}/k$, we are done. Then we may assume that $\mathcal{P}$ does not ramify in $k_{\widehat{E}}/k$. Let $\mathcal{Q}$ be a prime of $k_{\widehat{E}}$ lying over $\mathcal{P}$. Then $\mathcal{Q}$ ramifies in the extension $\widehat{E}_{t_0}/k_{\widehat{E}}$, which is the specialization of $\widehat{E}/k_{\widehat{E}}(T)$ at $t_0$. As the extension $\widehat{E}/k_{\widehat{E}}(T)$ is $k_{\widehat{E}}$-regular and Galois, we may apply part (1) of Theorem \ref{SIT 1} to get that $\widehat{E}/k_{\widehat{E}}(T)$ has vertical ramification at $\mathcal{Q}$ or $t_0$ meets some branch point modulo $\mathcal{Q}$. First, assume that $\widehat{E}/k_{\widehat{E}}(T)$ has vertical ramification at $\mathcal{Q}$. As the extension $k_{\widehat{E}}/k$ is Galois and as $\mathcal{Q}$ lies over $\mathcal{P}$, we may use \cite[Lemma 2.1]{Bec91} to get that $\widehat{E}/k(T)$ has vertical ramification at $\mathcal{P}$. Now, assume that $t_0$ meets some branch point $t_i$ modulo $\mathcal{Q}$. Then $t_0$ meets $t_i$ modulo $\mathcal{P}$ (as $\mathcal{Q}$ lies over $\mathcal{P}$). Hence part (1) holds.

\subsubsection{Proof of part (2)(a)}

For simplicity, denote the field $k_{\widehat{E}}(t_1,\dots,t_r)$ by $F$. As $t_0$ and $t_j$ meet modulo $\mathcal{P}$, there exists a prime $\mathcal{Q}$ of $F$ lying over $\mathcal{P}$ such that $t_0$ and $t_j$ meet modulo $\mathcal{Q}$. Since $\mathcal{P}$ is a good prime for $\widehat{E}/k(T)$, the prime $\mathcal{Q}$ is a good prime for $\widehat{E}F/F(T)$ (use \cite[Lemma 2.1]{Bec91} and the fact that the extension $F/k$ is Galois to handle vertical ramification). Moreover, $\mathcal{Q}$ unitizes $t_j$ (as $\mathcal{P}$ does). As the exension $\widehat{E}F/F(T)$ is $F$-regular and Galois, we may apply part (2) of Theorem \ref{SIT 1} to get that the inertia group at $\mathcal{Q}$ of the specialization of $\widehat{E}F/F(T)$ at $t_0$ is generated by an element of $C_j^{I_\mathcal{Q}(t_0,t_j)}$. Since the extension $F/k$ is Galois and $\mathcal{P}$ does not ramify in $F/k$ (condition (4) of Definition {\ref{bon premier}), we may use \cite[Lemma 3.2]{Bec91} to get that the inertia group at $\mathcal{P}$ of the specialization of $\widehat{E}/k(T)$ at $t_0$ is the inertia group at $\mathcal{Q}$ of the specialization of $\widehat{E}F/F(T)$ at $t_0$. Then this inertia group at $\mathcal{P}$ is generated by an element of $C_j^{I_\mathcal{Q}(t_0,t_j)}$. Hence, to get part (2)(a), it suffices to prove $$I_\mathcal{Q}(t_0,t_j) = I_\mathcal{P}(t_0,t_j).$$

Assume that $v_\mathcal{P}(t_0) \geq 0$ (the other case for which $v_\mathcal{P}(t_0) <0$ is similar). Then one has 
\begin{equation} \label{11}
I_\mathcal{Q}(t_0,t_j) = v_\mathcal{Q}(t_0-t_j)
\end{equation}
(as $t_j$ is $F$-rational) and 
\begin{equation} \label{12}
I_\mathcal{P}(t_0,t_j) = v_\mathcal{P}(m_{t_j}(t_0)).
\end{equation}
As $\mathcal{P}$ does not ramify in the extension $F/k$, \eqref{12} provides
\begin{equation} \label{13}
I_\mathcal{P}(t_0,t_j) = v_\mathcal{Q}(m_{t_j}(t_0)).
\end{equation}
Let $t_{j'}$ be a $k$-conjugate of $t_j$ distinct from $t_j$. We claim that 
\begin{equation} \label{8}
v_\mathcal{Q}(t_0 - t_{j'}) = 0.
\end{equation}
 Indeed, note that $v_\mathcal{Q}(t_{j'})=0$ (as $\mathcal{P}$ unitizes $t_j$). Then we get 
\begin{equation} \label{14}
v_\mathcal{Q}(t_0-t_{j'}) \geq 0.
\end{equation}
Assume that $v_\mathcal{Q}(t_0-t_{j'}) \not= 0$. Then \eqref{14} provides
\begin{equation}\label{9}
v_\mathcal{Q}(t_0-t_{j'}) > 0.
\end{equation}
Moreover, as $t_0$ and $t_j$ meet modulo $\mathcal{Q}$ and as $\mathcal{Q}$ unitizes $t_j$, we may apply Lemma \ref{3.3} and use \eqref{11} to get
\begin{equation} \label{10}
 v_\mathcal{Q}(t_0-t_j)>0.
\end{equation}
Combining \eqref{9} and \eqref{10} provides $v_\mathcal{Q}(t_j - t_{j'}) >0.$ Then the distinct branch points $t_j$ and $t_{j'}$ meet modulo $\mathcal{Q}$, which cannot happen. Hence \eqref{8} holds. It then remains to combine \eqref{13}, \eqref{8} and \eqref{11} to get
$$I_\mathcal{P}(t_0,t_j) = v_\mathcal{Q}(m_{t_j}(t_0)) = v_\mathcal{Q}(t_0-t_j) = I_\mathcal{Q}(t_0,t_j),$$
as needed for part (2)(a).

\begin{remark} \label{proof}
With the previous notation, consider the restriction $\mathcal{P}'$ of $\mathcal{Q}$ to $k_{\widehat{E}}$. Then $t_0$ and $t_j$ meet modulo $\mathcal{P}'$ and one shows similarly that one has $I_\mathcal{Q}(t_0,t_j) = I_{\mathcal{P}'}(t_0,t_j).$ We then get the following statement that will be used on several occasions in the rest of this paper.

\vspace{1mm}

\noindent
{\it{Let $\mathcal{P}$ be a non-zero prime ideal of $A$, $t_0 \in \mathbb{P}^1(k) \setminus \{t_1,\dots,t_r\}$ and $j \in \{1, \dots,r\}$. Assume that $t_0$ and $t_j$ meet modulo $\mathcal{P}$ and $\mathcal{P}$ is a good prime for $E/k(T)$ unitizing $t_j$. Then there exists a prime $\mathcal{Q}$ of $k_{\widehat{E}}$ lying over $\mathcal{P}$ such that $t_0$ and $t_j$ meet modulo $\mathcal{Q}$ and $I_\mathcal{P}(t_0,t_j) = I_{\mathcal{Q}}(t_0,t_j).$}}
\end{remark}

\subsubsection{Proof of part (2)(b)}

Let $E_{t_0,l}/k$ be a specialization of $E/k(T)$ at $t_0$. From our extra assumption 
\begin{equation} \label{15}
{\rm{Gal}}(\widehat{E}_{t_0}/k_{\widehat{E}})= \overline{G},
\end{equation}
the specialization algebra of $Ek_{\widehat{E}}/k_{\widehat{E}}(T)$ at $t_0$ consists of a single degree $d$ extension $(Ek_{\widehat{E}})_{t_0}/k_{\widehat{E}}$. As $t_0$ and $t_j$ meet modulo $\mathcal{P}$ and $\mathcal{P}$ is a good prime for $E/k(T)$ unitizing $t_j$, we may apply Remark \ref{proof}. There exists a prime $\mathcal{Q}$ of $k_{\widehat{E}}$ lying over $\mathcal{P}$ such that $t_0$ and $t_j$ meet modulo $\mathcal{Q}$ and 
\begin{equation} \label{18}
I_\mathcal{P}(t_0,t_j) = I_{\mathcal{Q}}(t_0,t_j).
\end{equation}
Since $\mathcal{P}$ does not ramify in the constant extension $k_{\widehat{E}}/k$ (condition (4) of Definition \ref{bon premier}), we may apply \cite[Lemma 5.4]{Bec91} to get that the set of all ramification indices at $\mathcal{P}$ of $E_{t_0,l}/k$ is the set of all ramification indices at $\mathcal{Q}$ of $(Ek_{\widehat{E}})_{t_0}/k_{\widehat{E}}$. Then, by \eqref{18}, it suffices to show that the set of all ramification indices at $\mathcal{Q}$ of $(Ek_{\widehat{E}})_{t_0}/k_{\widehat{E}}$ is the set of all lengths of disjoint cycles involved in the cycle decomposition in $S_d$ of any element of $(C_j^{S_d})^{I_\mathcal{Q}(t_0,t_j)}$.

Let $\mathcal{Q}'$ be a prime lying over $\mathcal{Q}$ in $(Ek_{\widehat{E}})_{t_0}/k_{\widehat{E}}$. Below we determine the ramification index $e_{\mathcal{Q}'/\mathcal{Q}}$ of $\mathcal{Q}'$ over $\mathcal{Q}$. Let $\mathcal{Q}''$ be a prime lying over $\mathcal{Q}'$ in $\widehat{E}_{t_0}/(Ek_{\widehat{E}})_{t_0}$ and let $g_j \in C_j$. By part (2)(a) of the Specialization Inertia Theorem (or by part (2) of Theorem \ref{SIT 1}), the inertia group of $\widehat{E}_{t_0}/k_{\widehat{E}}$ at $\mathcal{Q}''$ (over $\mathcal{Q}$) is equal to 
$$g \langle g_j^{I_\mathcal{Q}(t_0,t_j)} \rangle g^{-1}$$ 
for some $g \in {\rm{Gal}}(\widehat{E}_{t_0}/k_{\widehat{E}}) \subseteq \overline{G}$. Set $H={\rm{Gal}}(\widehat{E}/Ek_{\widehat{E}}).$ By \eqref{15}, the Galois group of the specialized extension $\widehat{E}_{t_0}/(Ek_{\widehat{E}})_{t_0}$ remains equal to $H$. We then obtain that the inertia group of $\widehat{E}_{t_0}/(Ek_{\widehat{E}})_{t_0}$ at $\mathcal{Q}''$ (over $\mathcal{Q}'$) is equal to $$ g \langle g_j^{I_\mathcal{Q}(t_0,t_j)} \rangle g^{-1} \bigcap H.$$ 
Hence the ramification index $e_{\mathcal{Q}'/\mathcal{Q}}$ is equal to 
\begin{equation} \label{19}
f(g) :=\frac{ \big |g \langle g_j^{I_\mathcal{Q}(t_0,t_j)} \rangle g^{-1} \big|}{ \big|g \langle g_j^{I_\mathcal{Q}(t_0,t_j)} \rangle g^{-1} \bigcap H \big|}.
\end{equation}
Conversely, let $g \in \overline{G}$. By part (2)(a) of the Specialization Inertia Theorem and as $g \in {\rm{Gal}}(\widehat{E}_{t_0}/k_{\widehat{E}})$ (\eqref{15}), $$g \langle g_j^{I_\mathcal{Q}(t_0,t_j)} \rangle g^{-1}$$ is the inertia group of some prime $\mathcal{Q}''$ lying over $\mathcal{Q}$ in $\widehat{E}_{t_0}/k_{\widehat{E}}$. Denote the restriction of $\mathcal{Q}''$ to $(Ek_{\widehat{E}})_{t_0}$ by $\mathcal{Q}'$. Then we get as before that $f(g)$ (defined in \eqref{19})
is the ramification index of $\mathcal{Q}'$ over $\mathcal{Q}$ in $(Ek_{\widehat{E}})_{t_0}/k_{\widehat{E}}$. Hence the set of all ramification indices at $\mathcal{Q}$ of $(Ek_{\widehat{E}})_{t_0}/k_{\widehat{E}}$ is equal to the set\footnote{This set does not depend on the choice of the element $g_j \in C_j$.}
$$\{f(g) : g \in \overline{G}\}.$$

Finally, consider the action of $\langle g_j^{I_\mathcal{Q}(t_0,t_j)} \rangle$ by left multiplication on the left cosets of $\overline{G}$ modulo $H$. Given $g \in \overline{G}$ and $e \in \mathbb{N}$, one has $$g_j^{e \cdot I_\mathcal{Q}(t_0,t_j)}. (gH) =gH \Longleftrightarrow g^{-1} g_j^{e \cdot I_\mathcal{Q}(t_0,t_j)} g \in H.$$ Then the orbit of the left coset $gH$ has cardinality $f(g^{-1})$. Hence the set of all ramification indices at $\mathcal{Q}$ of $(Ek_{\widehat{E}})_{t_0}/k_{\widehat{E}}$ is the set of the cardinalities of the orbits of the left cosets of $\overline{G}$ modulo $H$ under the action of $\langle g_j^{I_\mathcal{Q}(t_0,t_j)} \rangle$. As the action of $\overline{G}$ on its left cosets modulo $H$ gives the embedding morphism $\nu$, we get that the set of all ramification indices at $\mathcal{Q}$ of $(Ek_{\widehat{E}})_{t_0}/k_{\widehat{E}}$ is the set of all lengths of disjoint cycles involved in the cycle decomposition of $\nu(g_j^{I_\mathcal{Q}(t_0,t_j)})$, as needed.

\section{Specializations with specified ramified local behavior}

This section is devoted to the construction of points $t_0 \in \mathbb{P}^1(k)$ at which the specialization algebra of $E/k(T)$ consists of a single degree $n$ extension of $k$ and has specified ramified local behavior at any given finite set of prime ideals of $A$, within the limitations of the Specialization Inertia Theorem. Theorems \ref{Hilbert} and \ref{gc} give our precise  results.

\subsection{Data}

First, we state some notation for this section. Given a positive integer $s$, let $\mathcal{P}_1,\dots,\mathcal{P}_s$ be $s$ distinct good primes for $E/k(T)$.

\begin{remark}
Given $j$, there may be no $i_j$ such that $\mathcal{P}_j$ is a prime divisor of $m_{t_{i_j}}(T) \cdot m_{1/t_{i_j}}(T)$ unitizing $t_{i_j}$. In this case, if $\mathcal{P}_j$ unitizes each branch point, then, by Proposition \ref{transition2}, no specialization of $E/k(T)$ ramifies at $\mathcal{P}_j$. From now on, we assume that there exists such an index $i_j$.
\end{remark}

Let $(i_1,a_1), \dots, (i_s,a_s)$ be $s$ couples where, for each $j \in \{1,\dots,s\}$, $i_j$ is an index in $\{1,\dots,r\}$ such that $\mathcal{P}_j$ is a prime divisor of $m_{t_{i_j}}(T) \cdot m_{1/t_{i_j}}(T)$ unitizing $t_{i_j}$ and $a_j$ is a positive integer.

For $1 \leq j \leq s$, denote the set of all lengths of disjoint cycles involved in the cycle decomposition of any element of $(C_{i_j}^{S_d})^{a_j}$ by $S({i_j},a_j)$ and, given a finite $k$-\'etale algebra $\prod_{l=1}^{s'} F_l/k$, consider the following:

\vspace{1.4mm}

\noindent
(Ram/$\mathcal{P}_j$/$S({i_j},a_j)$) {\it{the following two conditions hold:

\vspace{1mm}

{\rm{(1)}} the set of all ramification indices at $\mathcal{P}_j$ of the extension $F_l/k$ is 

$S({i_j},a_j)$ for each $l \in \{1,\dots,{s'}\}$,

{\rm{(2)}} the inertia group at $\mathcal{P}_j$ of the {\it{compositum}} $\widehat{F_1} \dots {\widehat{F_{s'}}} /k$ of the Ga-

lois closures $\widehat{F_1}/k, \dots, \widehat{F_{s'}}/k$ is generated by some element of $C_{i_j}^{a_j}$.}}

\subsection{Main results} These results extend those from \cite[\S3.1]{Leg13a}.

\begin{theorem} \label{Hilbert}
Assume that $k$ is hilbertian. Then, for infinitely many points $t_0 \in k \setminus \{t_1,\dots,t_r\}$ in some arithmetic progression,

\vspace{0.5mm}

\noindent
{\rm{(1)}} the specialization algebra of $E/k(T)$ at $t_0$ consists of a single degree $n$ extension $E_{t_0}/k$ and the Galois closure $\widehat{E_{t_{0}}}/k$ has Galois group $G$,

\vspace{0.5mm}

\noindent
{\rm{(2)}} $E_{t_0}/k$ satisfies condition {\rm{(Ram/$\mathcal{P}_j$/$S({i_j},a_j)$)}} for each $j \in \{1,\dots,s\}$.

\noindent
Moreover, the fields $\widehat{E_{t_{0}}}$ may be required to be linearly disjoint over $k_{\widehat{E}}$.
\end{theorem}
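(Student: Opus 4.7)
The plan is to combine the ramification description of the Specialization Inertia Theorem with classical Hilbert irreducibility, reinforced by local congruence conditions at the primes $\mathcal{P}_1,\dots,\mathcal{P}_s$.

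\textbf{Step 1 (encoding the ramification as a congruence).} For each $j$, I first want a point $t_j^\star \in k$ with intersection multiplicity $I_{\mathcal{P}_j}(t_j^\star, t_{i_j}) = a_j$ exactly, not merely positive. Since $\mathcal{P}_j$ is a prime divisor of $m_{t_{i_j}}(T)\cdot m_{1/t_{i_j}}(T)$, one starts from some $t' \in k$ with positive intersection multiplicity; the good-prime hypothesis together with the fact that $\mathcal{P}_j$ unitizes $t_{i_j}$ gives $v_{\mathcal{P}_j}(m'_{t_{i_j}}(t')) = 0$ by \cite[Lemma 2.8]{Leg13a}, and the Taylor expansion trick used in the proof of Proposition \ref{transition2} then adjusts $t'$ modulo suitable powers of $\mathcal{P}_j$ to reach intersection multiplicity exactly $a_j$. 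Combining the resulting congruences for $j = 1,\dots,s$ via the Chinese Remainder Theorem produces a non-empty arithmetic progression $\Pi \subseteq k$ (modulo a power of $\prod_j \mathcal{P}_j$) such that every $t_0 \in \Pi$ satisfies $I_{\mathcal{P}_j}(t_0, t_{i_j}) = a_j$ for all $j$ simultaneously.

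\textbf{Step 2 (Hilbertianity and the Specialization Inertia Theorem).} Applying Hilbert irreducibility to a primitive element of $\widehat{E}/k(T)$ produces a thin set outside which $\mathrm{Gal}(\widehat{E_{t_0}}/k) = G$; for such $t_0$, the transitive action of $G$ on the $n$ cosets of $\mathrm{Gal}(\widehat{E}/E)$ forces the specialization algebra of $E/k(T)$ at $t_0$ to consist of a single degree $n$ field extension $E_{t_0}/k$ with $\widehat{E_{t_0}} = \widehat{E}_{t_0}$. Hilbertianity of $k$ then provides infinitely many $t_0$ in $\Pi$ outside this thin set. For any such $t_0$, the equality $\mathrm{Gal}(\widehat{E_{t_0}}/k) = G$ restricts to $\mathrm{Gal}(\widehat{E_{t_0}}/k_{\widehat{E}}) = \overline{G}$, so both parts of the Specialization Inertia Theorem apply at each $\mathcal{P}_j$; parts (2)(a) and (2)(b) translate directly into the two clauses of (Ram/$\mathcal{P}_j$/$S(i_j, a_j)$).

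\textbf{Step 3 (linear disjointness).} To upgrade ``infinitely many'' to ``infinitely many with $\widehat{E_{t_0}}$ pairwise linearly disjoint over $k_{\widehat{E}}$'', I iterate: given $t_0^{(1)},\dots,t_0^{(N)}$ whose Galois closures compose to $L_N$, I replace $\widehat{E}/k(T)$ by $\widehat{E} L_N / L_N(T)$ (which inherits the branch points, the geometric Galois group, the inertia canonical invariants, and the good-prime list of $\widehat{E}/k(T)$) and rerun Steps 1--2 over $L_N$. The resulting $t_0^{(N+1)}$ satisfies $\mathrm{Gal}(\widehat{E_{t_0^{(N+1)}}} L_N / L_N) = G$, which is the standard criterion for $\widehat{E_{t_0^{(N+1)}}}$ being linearly disjoint from $L_N$ over $k_{\widehat{E}}$. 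The main obstacle throughout is Step 1: pinning the intersection multiplicity to the exact value $a_j$ rather than only ensuring it is positive, since the inertia class $C_{i_j}^{a_j}$ depends essentially on $a_j$; the good-prime hypothesis plus the unitization of $t_{i_j}$ are precisely what make the Taylor/Hensel adjustment succeed.
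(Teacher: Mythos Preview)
Your approach is essentially the same as the paper's. Step 1 reconstructs Lemma \ref{core} (which the paper simply invokes, citing \cite[\S3.4]{Leg13a}), and Step 2 is exactly the paper's argument: apply a Hilbert irreducibility statement with congruence constraints (the paper cites \cite[Lemma 3.4]{Gey78}) and then feed ${\rm Gal}(\widehat{E}_{t_0}/k_{\widehat{E}})=\overline{G}$ into part (2) of the Specialization Inertia Theorem.

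One small correction to Step 3: if you literally rerun Steps 1--2 \emph{over} $L_N$, the specialization point $t_0^{(N+1)}$ you produce lies in $L_N$, not in $k$, and the Galois group of $\widehat{E}L_N/L_N(T)$ is $\overline{G}$, not $G$ (since $L_N \supseteq k_{\widehat{E}}$). The clean fix is to stay over $k$: apply Hilbert irreducibility to the compositum $\widehat{E}L_N/k(T)$, still subject to the arithmetic progression from Step 1. A point $t_0^{(N+1)}\in k$ at which this compositum specializes with full Galois group yields both ${\rm Gal}(\widehat{E}_{t_0^{(N+1)}}/k)=G$ and $\widehat{E}_{t_0^{(N+1)}}\cap L_N=k_{\widehat{E}}$, which is the linear disjointness you want. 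This is the standard argument and is what \cite[Lemma 3.4]{Gey78} (as cited by the paper) already packages.
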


Recall that a set $\Sigma$ of conjugacy classes of $\overline G$ is said to be {\it{$g$-complete}} (a terminology due to Fried \cite{Fri95}) if no proper subgroup of $\overline G$ intersects each conjugacy class in $\Sigma$. For instance, the set of all conjugacy classes of $\overline G$ is g-complete \cite{Jor72}.

Assume in Theorem \ref{gc} below that the extension $E/k(T)$ is $k$-regular and there exists a subset $I \subset \{1,\dots,r\}$ satisfying the following:

\vspace{0.5mm}

\noindent
(a) the set $\{{C_i} \, : \, i \in I \} \bigcup \, \{C_{i_j}^{a_j} \, : \, j=1,\dots,s\}$ is g-complete,

\vspace{0.5mm}

\noindent
(b) $m_{t_i}(T) \cdot m_{1/t_i}(T)$ has infinitely many prime divisors ($i \in I$).

\begin{theorem} \label{gc}
For every point $t_0 \in k \setminus \{t_1,\dots,t_r\}$ in some arithmetic progression,

\vspace{0.5mm}

\noindent
{\rm{(1)}} the specialization algebra of $E/k(T)$ at $t_0$ consists of a single degree $n$ extension $E_{t_0}/k$ and the Galois group ${\rm{Gal}}(\widehat{E_{t_{0}}}/k)$ of the Galois closure $\widehat{E_{t_{0}}}/k$ satisfies $\overline{G} \subseteq {\rm{Gal}}(\widehat{E_{t_{0}}}/k) \subseteq G$,

\vspace{0.5mm}

\noindent
{\rm{(2)}} $E_{t_0}/k$ satisfies condition {\rm{(Ram/$\mathcal{P}_j$/$S({i_j},a_j)$)}} for each $j \in \{1,\dots,s\}$.
\end{theorem}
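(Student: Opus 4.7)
The idea is to enlarge the list $\{\mathcal{P}_1,\dots,\mathcal{P}_s\}$ with enough auxiliary primes so that the inertia data produced by the Specialization Inertia Theorem fills out the whole geometric Galois group $\overline G$ via g-completeness. First, for each $i\in I$, hypothesis (b) together with the finiteness of bad primes and of primes failing to unitize the branch points (Definition \ref{bon premier}) lets me pick a prime $\mathcal{P}'_i$ of $A$, distinct from $\mathcal{P}_1,\dots,\mathcal{P}_s$ and from the previously chosen $\mathcal{P}'_{i'}$ ($i' \in I$, $i' \neq i$), that is a good prime for $E/k(T)$, unitizes every branch point, and is a prime divisor of $m_{t_i}(T)\cdot m_{1/t_i}(T)$.

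Next I impose simultaneous local conditions on $t_0$. Using the argument of Proposition \ref{transition2} (together with the Taylor-expansion trick), for each $\mathcal{P}'_i$ I can produce $t_i^\star \in A_{\mathcal{P}'_i}$ with $I_{\mathcal{P}'_i}(t_i^\star, t_i) = 1$, and no other branch point meets $t_i^\star$ modulo $\mathcal{P}'_i$ (since distinct branch points cannot meet modulo a good prime). Similarly, for each $j \in \{1,\dots,s\}$, the fact that $\mathcal{P}_j$ is a prime divisor of $m_{t_{i_j}}(T)\cdot m_{1/t_{i_j}}(T)$ unitizing $t_{i_j}$ lets me adjust a point modulo a sufficiently high power of $\mathcal{P}_j$ and obtain $t_j^\star \in k$ with $I_{\mathcal{P}_j}(t_j^\star, t_{i_j}) = a_j$ exactly. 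The Chinese Remainder Theorem in $A$ then patches these local data into a congruence class modulo $\prod_j \mathcal{P}_j^{a_j+1}\cdot\prod_{i\in I}(\mathcal{P}'_i)^2$, and any $t_0$ in that class (away from the finitely many branch points) realizes all prescribed intersection multiplicities by Definition \ref{**}. This congruence class is the arithmetic progression in the statement.

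For such $t_0$, part (2)(a) of the Specialization Inertia Theorem yields that the inertia of $\widehat{E_{t_0}}/k$ at $\mathcal{P}_j$ (resp.\ $\mathcal{P}'_i$) is generated by some element of $C_{i_j}^{a_j}$ (resp.\ $C_i$); since a good prime does not ramify in $k_{\widehat E}/k$ (condition (4) of Definition \ref{bon premier}), these inertia subgroups all sit inside $\mathrm{Gal}(\widehat{E_{t_0}}/k_{\widehat E}) \subseteq \overline G$. Hence this latter subgroup of $\overline G$ meets every class in $\{C_i : i\in I\}\cup\{C_{i_j}^{a_j} : 1\leq j\leq s\}$, and hypothesis (a) forces $\mathrm{Gal}(\widehat{E_{t_0}}/k_{\widehat E}) = \overline G$. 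Since $E/k(T)$ is $k$-regular, one has $\overline G\, H = G$ with $H = \mathrm{Gal}(\widehat E/E)$ (as $[E\overline k:\overline k(T)] = n$), so $\mathrm{Gal}(\widehat{E_{t_0}}/k) \supseteq \overline G$ acts transitively on $G/H$; this gives the irreducibility of the specialization algebra and the inclusion $\mathrm{Gal}(\widehat{E_{t_0}}/k) \supseteq \overline G$ of~(1), the other inclusion being automatic. Finally, with $\mathrm{Gal}(\widehat{E_{t_0}}/k_{\widehat E}) = \overline G$ established, part (2)(b) of the Specialization Inertia Theorem applies and gives (Ram/$\mathcal{P}_j$/$S(i_j,a_j)$), as required for~(2).

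The main obstacle is the coherent patching step: one has to realize \emph{exact} intersection multiplicities $a_j$ at the primes $\mathcal{P}_j$ (not merely positive ones), produce auxiliary primes $\mathcal{P}'_i$ meeting all four parts of Definition \ref{bon premier}, and use condition~(2) of that definition to rule out parasitic meetings of $t_0$ with the ``wrong'' branch points modulo these primes. Once this is in hand, the remainder of the argument is largely a structural unwinding of the Specialization Inertia Theorem and of the very definition of g-completeness.
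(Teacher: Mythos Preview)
Your proposal is correct and follows essentially the same strategy as the paper: enlarge the set of primes by auxiliary good primes $\mathcal{P}'_i$ ($i\in I$) hitting each $m_{t_i}(T)\cdot m_{1/t_i}(T)$, force the prescribed intersection multiplicities by a congruence condition, then use part~(2)(a) of the Specialization Inertia Theorem together with g-completeness to obtain $\mathrm{Gal}(\widehat{E}_{t_0}/k_{\widehat E})=\overline G$, from which both conclusions follow.

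Two small differences worth noting. First, the paper packages your ``coherent patching step'' into a single black box, Lemma~\ref{core} (proved in the earlier paper \cite{Leg13a}), which directly delivers an arithmetic progression of $t_{0,u}$'s with $I_{\mathcal P_j}(t_{0,u},t_{i_j})=a_j$ and $I_{\mathcal P'_i}(t_{0,u},t_i)=1$; your hands-on description via the Taylor trick and CRT is exactly what that lemma encodes, but be aware that when some $t_{i_j}=\infty$ the progression is not literally a residue class in $A$ (Lemma~\ref{core} handles this with the index set $S$). Second, to place the inertia elements inside $\mathrm{Gal}(\widehat{E}_{t_0}/k_{\widehat E})$ the paper lifts each prime to $k_{\widehat E}$ via Remark~\ref{proof} and applies the Specialization Inertia Theorem over $k_{\widehat E}$, whereas you apply it over $k$ and then observe that good primes are unramified in $k_{\widehat E}/k$; both routes are valid and yield the same conclusion.
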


\begin{remark}
(1) Assumption (a) holds if the set $\{C_1,\dots,C_r\}$ is itself g-complete (with $I=\{1,\dots,r\}$). Several finite extensions of $k(T)$ are known to satisfy this condition. Here is an example. 

Let $n$, $m$, $q$ and $v$ be positive integers such that $n\geq 3$, $1 \leq m \leq n$, $(m,n)=1$ and $q(n-m)-vn=1$. Then the degree $n$ $k$-regular extension of $k(T)$ generated by one root of the irreducible trinomial $Y^n-T^vY^m+T^q$ satisfies the desired condition. Indeed, this extension has branch point set $\{0, \infty, m^m n^{-n} (n-m)^{n-m}\}$, with corresponding inertia groups generated by the disjoint product of an $m$-cycle and an $(n-m)$-cycle at $0$, an $n$-cycle at $\infty$ and a transposition at $m^m n^{-n} (n-m)^{n-m}$. See \cite[$\S$2.4]{Sch00}. 

\vspace{1mm}

\noindent 
(2) Assumption (b) holds in either one of the following two situations:

\vspace{0.5mm}

\noindent
- {\hbox{each $t_i$ ($i \in I$) is $k$-rational and $A$ has infinitely many prime ideals}},

\vspace{0.5mm}

\noindent
- either $k$ is a number field or $k$ is a finite extension of a rational function field $\kappa(X)$, with $\kappa$ an arbitrary algebraically closed field of characteristic zero and $X$ an indeterminate. Indeed, this follows from the Tchebotarev density theorem in the case $k$ is a number field and this is left to the reader as an easy exercise in the function field case.

\vspace{1mm}

\noindent
(3) If $\widehat{E}/k(T)$ is itself $k$-regular, then we get $\overline{G}= {\rm{Gal}}(\widehat{E_{t_{0}}}/k) = G$ in condition (1) from Theorem \ref{gc}. This regularity condition is satisfied if either $\overline{G}=S_n$ (and then $G=S_n$ too as $E/k(T)$ has degree $n$) or $G$ is a simple group (as $\overline{G}$ is a non-trivial normal subgroup of $G$).
\end{remark}

\subsection{Proofs of Theorems \ref{Hilbert} and \ref{gc}}
The proofs rest on those of \cite[Corollaries 3.3 and 3.4]{Leg13a}, which are Theorems \ref{Hilbert} and \ref{gc} in the case the extension $E/k(T)$ is $k$-regular and Galois. We explain below how to handle the bigger generality.

\subsubsection{A central lemma}
Lemma \ref{core} below, which is proved in \cite[\S3.4]{Leg13a}, will be used on several occasions in the rest of this paper.

Denote the set of all indices $j \in \{1,\dots,s\}$ such that $t_{i_j} \not= \infty$ by $S$ and, for each $j \in \{1,\dots,s\}$, let $x_{\mathcal{P}_j} \in A$ be a generator of the maximal ideal $\mathcal{P}_j A_{\mathcal{P}_j}$ of $A_{\mathcal{P}_j}$.

\begin{lemma} \label{core}
There exists an element $\theta \in k$ that satisfies the following property. For each element $u$ of $k$ lying in $\bigcap_{l=1}^s A_{\mathcal{P}_l}$ and with $$t_{0,u}= \theta + u \cdot \prod_{l \in {S}} x_{\mathcal{P}_l}^{a_l+1},$$ one has $I_{\mathcal{P}_j}(t_{0,u}, t_{i_j})=a_j$ for each $j \in \{1,\dots,s\}$. Moreover, such an element $\theta$ may be required to lie in $A$ if $S=\{1,\dots,s\}$ (in particular, if $\infty$ is not a branch point).
\end{lemma}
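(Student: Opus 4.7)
The plan is to construct $\theta$ by solving a local problem at each $\mathcal{P}_j$ and then gluing via weak approximation. For each $j$, I will first produce a local ``target'' $\theta_j \in k$ realizing $I_{\mathcal{P}_j}(\theta_j, t_{i_j}) = a_j$ in a way that is stable under perturbations of $\mathcal{P}_j$-adic order at least $a_j + 1$. Weak approximation then yields a single $\theta \in k$ simultaneously close to every $\theta_j$.

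The local construction splits according to whether $j \in S$. For $j \notin S$, so $t_{i_j} = \infty$, Definition \ref{**} reduces the requirement to $v_{\mathcal{P}_j}(\theta_j) = -a_j$, so I simply set $\theta_j = x_{\mathcal{P}_j}^{-a_j}$. For $j \in S$, the prime-divisor hypothesis together with the unitizing assumption supplies some $\alpha_j \in A_{\mathcal{P}_j}$ with $v_{\mathcal{P}_j}(m_{t_{i_j}}(\alpha_j)) \geq 1$: if $\mathcal{P}_j$ divides $m_{t_{i_j}}$ the claim is immediate by monicity; if it divides $m_{1/t_{i_j}}$ instead, the unitizing forces the witness to be a $\mathcal{P}_j$-unit whose inverse serves as $\alpha_j$. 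Since $\mathcal{P}_j$ is a good prime and $\alpha_j$ meets $t_{i_j}$ modulo $\mathcal{P}_j$ (Lemma \ref{3.3}(1)), \cite[Lemma 2.8]{Leg13a} (as used in the proof of Proposition \ref{transition2}) gives $v_{\mathcal{P}_j}(m'_{t_{i_j}}(\alpha_j)) = 0$. Newton iteration then refines $\alpha_j$ to $\tilde\alpha_j \in A_{\mathcal{P}_j}$ with $v_{\mathcal{P}_j}(m_{t_{i_j}}(\tilde\alpha_j)) > a_j$, and setting $\theta_j = \tilde\alpha_j + x_{\mathcal{P}_j}^{a_j}$, the Taylor expansion of $m_{t_{i_j}}$ around $\tilde\alpha_j$ is dominated by its linear term $x_{\mathcal{P}_j}^{a_j} m'_{t_{i_j}}(\tilde\alpha_j)$, giving $v_{\mathcal{P}_j}(m_{t_{i_j}}(\theta_j)) = a_j$ exactly.

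Weak approximation now produces $\theta \in k$ with $v_{\mathcal{P}_j}(\theta - \theta_j) \geq a_j + 1$ for every $j$ and no extraneous poles at any other prime; if $S = \{1,\dots,s\}$ every $\theta_j$ lies in $A_{\mathcal{P}_j}$ and the standard Chinese Remainder Theorem for Dedekind domains delivers $\theta \in A$. To finish, fix $u \in \bigcap_l A_{\mathcal{P}_l}$ and write $t_{0,u} = \theta + \delta$ with $\delta = u \prod_{l \in S} x_{\mathcal{P}_l}^{a_l+1}$. For $j \in S$ the factor $x_{\mathcal{P}_j}^{a_j+1}$ gives $v_{\mathcal{P}_j}(\delta) \geq a_j + 1$, while for $j \notin S$ the product contains no $x_{\mathcal{P}_j}$ factor, so $\delta$ is $\mathcal{P}_j$-integral. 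In either case a final Taylor expansion (respectively, a direct valuation computation using $v_{\mathcal{P}_j}(\theta) = -a_j$) verifies $I_{\mathcal{P}_j}(t_{0,u}, t_{i_j}) = a_j$.

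The main obstacle is the local construction for $j \in S$: both the extraction of an integral $\alpha_j$ from the prime-divisor hypothesis and the non-vanishing $v_{\mathcal{P}_j}(m'_{t_{i_j}}(\alpha_j)) = 0$ rely on the full good-prime assumption, in particular on the requirement that distinct branch points do not meet modulo $\mathcal{P}_j$, which prevents a collision of roots of $m_{t_{i_j}}$ in the residue field and so secures the simple-root hypothesis underlying Hensel's lemma.
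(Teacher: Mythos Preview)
The paper does not prove Lemma \ref{core}; it simply records that the lemma ``is proved in \cite[\S3.4]{Leg13a}''. So a line-by-line comparison with the paper itself is not possible. That said, your argument is correct and is very much in the spirit of the techniques the paper does display (Taylor expansions of $m_{t_i}$ combined with \cite[Lemma 2.8]{Leg13a}, exactly as in the proof of Proposition \ref{transition2}, together with weak approximation / CRT), so it is a faithful reconstruction of what the cited proof presumably does.

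A few small points worth tightening. First, in the $j\in S$ case where $\mathcal{P}_j$ divides $m_{1/t_{i_j}}$ rather than $m_{t_{i_j}}$, your claim that the witness $\beta$ is a $\mathcal{P}_j$-unit uses that the constant term of $m_{1/t_{i_j}}$ is a $\mathcal{P}_j$-unit; this follows from unitizing since that constant term and the constant term of $m_{t_{i_j}}$ are inverse to each other up to sign, but it deserves a word. Then the identity $m_{1/t_{i_j}}(\beta)=a_{t_{i_j}}^{-1}\beta^{d}\,m_{t_{i_j}}(1/\beta)$ (with $d=\deg m_{t_{i_j}}$) immediately gives $v_{\mathcal{P}_j}(m_{t_{i_j}}(1/\beta))>0$, so $\alpha_j=1/\beta$ works. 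Second, your ``no extraneous poles'' clause is unnecessary in the general case (any $\theta\in k$ suffices), and in the case $S=\{1,\dots,s\}$ you should first replace each $\theta_j\in A_{\mathcal{P}_j}$ by an element of $A$ congruent to it modulo $\mathcal{P}_j^{a_j+1}$ before invoking CRT in $A$. Third, for $j\notin S$ the verification that $I_{\mathcal{P}_j}(t_{0,u},\infty)=a_j$ uses that each $x_{\mathcal{P}_l}$ lies in $A\subset A_{\mathcal{P}_j}$, which is built into the paper's setup but is worth making explicit. None of these affect the validity of the argument.
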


The following consequence will be used on several occasions in the rest of this paper. Fix a point $t_{0,u}$ as above and assume that it is not a branch point. From Lemma \ref{3.3}, $t_{0,u}$ meets the branch point $t_{i_j}$ modulo $\mathcal{P}_j$ for each $j \in \{1,\dots,s\}$. Hence, under the condition ${\rm{Gal}}(\widehat{E}_{t_{0,u}}/k_{\widehat{E}})= \overline{G}$, we may apply part (2) of the Specialization Inertia Theorem to get that the specialization algebra of $E/k(T)$ at $t_{0,u}$ satisfies condition {\rm{(Ram/$\mathcal{P}_j$/$S({i_j},a_j)$)}} for each $j \in \{1,\dots,s\}$.

\subsubsection{Proof of Theorem \ref{Hilbert}}

Assume that $k$ is hilbertian and fix an element $\theta$ as in Lemma \ref{core}. From \cite[Lemma 3.4]{Gey78}, there exist infinitely many $u \in \bigcap_{l=1}^s A_{\mathcal{P}_l}$ such that the specializations $\widehat{E}_{t_{0,u}}/k$ of $\widehat{E}/k(T)$ at $$t_{0,u}=\theta + u \cdot \prod_{l \in {S}} x_{\mathcal{P}_l}^{a_l+1}$$ all have Galois group $G$. Hence the corresponding specialization algebras of $E/k(T)$ all consist of a single degree $n$ extension of $k$, {\it{i.e.}} part (1) of the conclusion holds with $t_0=t_{0,u}$. Moreover, for such a $t_{0,u}$, one has necessarily ${\rm{Gal}}(\widehat{E}_{t_{0,u}}/k_{\widehat{E}})= \overline{G}$. As explained in \S4.3.1, this provides part (2) of the conclusion with $t_0=t_{0,u}$, thus ending the proof of Theorem \ref{Hilbert}.

\subsubsection{Proof of Theorem \ref{gc}}

Given $i \in I$, pick a prime divisor $\mathcal{P}'_i$ of $m_{t_i}(T) \cdot m_{1/t_i}(T)$ that is a good prime for $E/k(T)$ unitizing $t_i$ (assumption (b)). We may and will require the prime ideals $\mathcal{P}'_i$ ($i \in I$) and $\mathcal{P}_1,\dots,\mathcal{P}_s$ to be distinct.

Apply Lemma \ref{core} to the larger set $\{\mathcal{P}_j \, : \, j \in \{1,\dots,s\}\} \cup \{\mathcal{P}'_i \, : \, i \in I\}$ of prime ideals, each $\mathcal{P}_j$ given with the couple $(i_j,a_j)$ from \S4.1 and each $\mathcal{P}'_i$ given with the couple $(i,1)$. Let $S'$ be the set of all indices $i \in I$ such that $t_i \not= \infty$. Then there is an element $\theta \in k$ that satisfies the following property. For each $u \in k$ satisfying $v_{\mathcal{P}_j}(u) \geq 0$ for each $j \in \{1,\dots,s\}$ and $v_{\mathcal{P}'_i}(u) \geq 0$ for each $i \in I$ and with $$t_{0,u}= \theta + u \cdot \prod_{l \in S} x_{\mathcal{P}_l}^{a_l+1} \cdot \prod_{l \in S'} x_{\mathcal{P}'_l}^{2},$$ one has $I_{\mathcal{P}_j}(t_{0,u},t_{i_j}) = a_j$ for each $j \in \{1,\dots,s\}$ and $I_{\mathcal{P}'_i}(t_{0,u},t_i) = 1$ for each $i \in I$.

Fix such a point $t_{0,u}$ and assume that it is not a branch point. By Lemma \ref{3.3}, $t_{0,u}$ meets $t_{i_j}$ modulo $\mathcal{P}_j$ for each $j \in \{1,\dots,s\}$ and $t_0$ meets $t_i$ modulo $\mathcal{P}'_i$ for each $i \in I$. From Remark \ref{proof}, the following two conditions then hold:

\noindent
- for each $j \in \{1,\dots,s\}$, there exists a prime $\mathcal{Q}_j$ of $k_{\widehat{E}}$ lying over $\mathcal{P}_j$ such that $t_{0,u}$ and $t_{i_j}$ meet modulo $\mathcal{Q}_j$ and $I_{\mathcal{Q}_j}(t_{0,u},t_{i_j})=I_{\mathcal{P}_j}(t_{0,u},t_{i_j})=a_j$,

\noindent
- for each $i \in I$, there exists a prime $\mathcal{Q}'_i$ of $k_{\widehat{E}}$ lying over $\mathcal{P}'_i$ such that $t_{0,u}$ and $t_{i}$ meet modulo $\mathcal{Q}'_i$ and $I_{\mathcal{Q}'_i}(t_{0,u},t_{i})=I_{\mathcal{P}'_i}(t_{0,u},t_{i})=1$.

\noindent
Next, apply part (2)(a) of the Specialization Inertia Theorem to the extension $\widehat{E}/k_{\widehat{E}}(T)$ and the set $\{\mathcal{Q}_j \, : \, j \in \{1,\dots,s\}\} \cup \{\mathcal{Q}'_i \, : \, i \in I\}$ of primes to obtain that ${\rm{Gal}}(\widehat{E}_{t_{0,u}}/k_{\widehat{E}})$ intersects each $C_i$ $(i \in I)$ and each $C_{i_j}^{a_j}$ ($j \in \{1,\dots,s\}$). Then we get ${\rm{Gal}}({\widehat{E}}_{t_{0,u}}/k_{\widehat{E}})=\overline{G}$ (assumption (a)). Hence condition (2) from the conclusion holds with $t_0=t_{0,u}$ (as explained in \S4.3.1). As ${\rm{Gal}}(\widehat{E}_{t_{0,u}}/k_{\widehat{E}})=\overline{G}$ and $E/k(T)$ is $k$-regular (and so the image $\nu(\overline{G})$ of $\overline{G}$ {\it{via}} $\nu : \overline{G} \rightarrow S_n$ is a transitive subgroup of $S_n$), condition (1) from the conclusion holds with $t_0=t_{0,u}$, as needed.

\section{Specializations with specified local behavior}

The aim of this section is Theorem \ref{DGL} below which combines Theorem \ref{gc} and previous work in the number field case. 

\subsection{Notation}

First, we state some notation for this section. Assume that $k$ is a number field and $A$ is the integral closure of $\Zz$ in $k$.

\subsubsection{Data for the global part} Denote the number of non-trivial conjugacy classes of $\overline{G}$ by ${\bf{cc}}(\overline{G})$. Pick ${\bf{cc}}(\overline{G})$ distinct prime numbers $p_1,\dots, p_{{\bf{cc}}(\overline{G})} \geq r^2 |\overline{G}|^2$, each of which is totally split in $k_{\widehat{E}}/\mathbb{Q}$ and such that every prime ideal of $A$ lying over of one these prime numbers is a good\footnote{Here and in \S5.1.2, condition (4) from Definition \ref{bon premier} may be removed.} prime for $E/k(T)$ \footnote{Infinitely many such primes can be found by the Tchebotarev density theorem.}. These prime numbers $p_1,\dots, p_{{\bf{cc}}(\overline{G})}$ may and will be assumed to depend only on the extension $E/\Qq(T)$.

\subsubsection{Data for the unramified part}Recall that the {\it type} of an element $\sigma \in S_d$ is the (multiplicative) divisor of all lengths of disjoint cycles involved in the cycle decomposition of $\sigma$ ({\it{e.g.}} $d$-cycles have type $d^1$).

Let ${\mathcal{S}}_{\rm{ur}}$ be a finite set of good primes for $E/k(T)$. For each $\mathcal{P} \in \mathcal{S}_{\rm{ur}}$, 

\vspace{0.5mm}

\noindent
(a) assume that the residue characteristic $p$ satisfies $p \geq r^2|\overline G|^2$ and is totally split in the extension $k_{\widehat E}/\mathbb{Q}$,

\vspace{0.5mm}

\noindent
(b) fix positive integers $d_{\mathcal{P},1}, \dots, d_{\mathcal{P},s_\mathcal{P}}$ (possibly repeated) such that $d_{\mathcal{P},1}^{1} \dots d_{\mathcal{P},s_\mathcal{P}}^{1}$ is the type of an element $\nu({g_{\mathcal{P}}})$ of $\nu(\overline{G}) \subseteq S_d$ and denote the conjugacy class of $g_\mathcal{P}$ in $G$ by $C_{\mathcal{P}}$.

\subsubsection{Data for the ramified part}
Let ${\mathcal{S}}_{\rm{ra}}$ be a finite set of good primes $\mathcal{P}$ for $E/k(T)$ such that, for each $\mathcal{P} \in {\mathcal{S}}_{\rm{ra}}$, there exists some index $i_\mathcal{P} \in \{1,\dots,r\}$ such that $t_{i_\mathcal{P}} \not= \infty$, $\mathcal{P}$ unitizes $t_{i_\mathcal{P}}$ and $\mathcal{P}$ is a prime divisor of $m_{t_{i_\mathcal{P}}}(T) \cdot m_{1/t_{i_\mathcal{P}}}(T)$. For each prime ideal $\mathcal{P} \in \mathcal{S}_{\rm{ra}}$,

\vspace{0.5mm}

\noindent
(a) assume that the ramification index and the residue degree of $\mathcal{P}$ in the extension $k/\mathbb{Q}$ both are equal to 1,

\vspace{0.5mm}

\noindent
(b) fix an integer $a_\mathcal{P} \geq 1$ and an index $i_\mathcal{P} \in \{1,\dots,r\}$ such that $t_{i_\mathcal{P}} \not= \infty$, $\mathcal{P}$ unitizes $t_{i_\mathcal{P}}$ and $\mathcal{P}$ is a prime divisor of $m_{t_{i_\mathcal{P}}}(T) \cdot m_{1/t_{i_\mathcal{P}}}(T)$.

\subsubsection{Remaining notation}

Denote the residue characteristic of a prime ideal $\mathcal{P} \in \mathcal{S}_{\rm{ur}} \cup \mathcal{S}_{\rm{ra}}$ by $p_\mathcal{P}$ and assume that the prime numbers $p _\mathcal{P}$ ($\mathcal{P} \in \mathcal{S}_{\rm{ur}} \cup \mathcal{S}_{\rm{ra}}$) and $p_l$ ($l=1, \dots, {\bf{cc}}(\overline{G})$) are distinct. Finally, set $$\beta = \prod_{l=1}^{{\bf{cc}}(\overline{G})} p_l.$$

\subsection{Statement of Theorem \ref{DGL}}

\begin{theorem} \label{DGL}
Assume that $E/k(T)$ is $k$-regular. Then, for some integer $\theta$, the following holds. For every integer ${t_0}$ such that $${t_0} \equiv \theta \, \, {\rm{mod}} \, \, \beta \cdot \prod_{\mathcal{P} \in \mathcal{S}_{\rm{ur}}} p_\mathcal{P} \cdot \prod_{\mathcal{P} \in \mathcal{S}_{\rm{ra}}} p_\mathcal{P}^{a_{{\mathcal{P}}}+1},$$ ${t_0}$ is not a branch point and the following conditions hold.

\vspace{1mm}

\noindent
{\rm{(1)}} The specialization algebra of $E/k(T)$ at $t_0$ consists of a single degree $n$ extension $E_{t_0}/k$ and the Galois group ${\rm{Gal}}(\widehat{E_{t_{0}}}/k)$ of the Galois closure $\widehat{E_{t_{0}}}/k$ satisfies $\overline{G} \subseteq {\rm{Gal}}(\widehat{E_{t_{0}}}/k) \subseteq G$.

\vspace{0.5mm}

\noindent
{\rm{(2)}} For each prime ideal $\mathcal{P} \in \mathcal{S}_{\rm{ur}}$, $\mathcal{P}$ does not ramify in $\widehat{E_{t_0}}/k$, the associated Frobenius is in the conjugacy class $C_{\mathcal{P}}$ and the integers $d_{\mathcal{P},1}, \dots, d_{\mathcal{P},s_\mathcal{P}}$ are the residue degrees at $\mathcal{P}$ of $E_{t_0}/k$.

\vspace{1mm}

\noindent
{\rm{(3)}} For each prime ideal $\mathcal{P} \in \mathcal{S}_{\rm{ra}}$, the extension $E_{t_0}/k$ satisfies condition {\rm{(Ram/$\mathcal{P}$/$S({i_\mathcal{P}},a_\mathcal{P})$)}} from \S4.1.
\end{theorem}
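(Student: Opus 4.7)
The plan is to combine the ramified-prescription argument from Theorem \ref{gc} with a Frobenius-prescription argument at the unramified primes, along the lines of \cite{DL12}. In addition to the data $\mathcal{S}_{\rm{ur}}$ and $\mathcal{S}_{\rm{ra}}$, I will use an auxiliary family of ${\bf{cc}}(\overline{G})$ prime ideals of $A$ --- one lying over each $p_l$ --- each attached to a distinct non-trivial conjugacy class of $\overline{G}$. Their role is purely to force the specialization geometric Galois group to meet every non-trivial conjugacy class of $\overline{G}$, and hence to equal $\overline{G}$ by Jordan's theorem (the g-completeness mechanism already exploited in \S4).

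For the ramified part, I apply Lemma \ref{core} to the set $\mathcal{S}_{\rm{ra}}$, each $\mathcal{P}$ being equipped with its prescribed couple $(i_\mathcal{P},a_\mathcal{P})$. Since each $t_{i_\mathcal{P}} \neq \infty$ and the ramification/residue degrees of each $\mathcal{P}$ over $\mathbb{Q}$ are $1$, one obtains an integer $\theta_0$ such that every integer of the form $\theta_0 + u \prod_{\mathcal{P} \in \mathcal{S}_{\rm{ra}}} p_\mathcal{P}^{a_\mathcal{P}+1}$ (with $u$ $\mathcal{P}$-integral) satisfies $I_\mathcal{P}(t_0,t_{i_\mathcal{P}}) = a_\mathcal{P}$ for every $\mathcal{P} \in \mathcal{S}_{\rm{ra}}$. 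Once ${\rm{Gal}}(\widehat{E_{t_0}}/k_{\widehat{E}}) = \overline{G}$ is established, part (2)(b) of the Specialization Inertia Theorem combined with Remark \ref{proof} yields condition (Ram/$\mathcal{P}$/$S(i_\mathcal{P},a_\mathcal{P})$) and hence condition (3).

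For the unramified primes and the auxiliary primes, I want to impose congruences $t_0 \equiv \bar{t}_\mathcal{P} \pmod{\mathcal{P}}$ for some $\bar{t}_\mathcal{P}$ in the residue field of $\mathcal{P}$ chosen so that the specialization Frobenius at $\mathcal{P}$ lies in a prescribed conjugacy class (namely $C_\mathcal{P}$ for $\mathcal{P} \in \mathcal{S}_{\rm{ur}}$, and for each auxiliary prime the non-trivial conjugacy class it was attached to). The good-reduction hypothesis on $\mathcal{P}$ together with $p_\mathcal{P}$ being totally split in $k_{\widehat{E}}/\mathbb{Q}$ ensures that $\widehat{E}/k_{\widehat{E}}(T)$ reduces modulo $\mathcal{P}$ to a Galois cover of $\mathbb{F}_{p_\mathcal{P}}(T)$ with the same geometric Galois group $\overline{G}$. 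A quantitative Chebotarev-type statement for curves over finite fields --- the Lang--Weil / explicit Weil-bound argument which uses exactly the hypothesis $p_\mathcal{P} \geq r^2|\overline{G}|^2$ --- then produces a residue $\bar{t}_\mathcal{P}$ (not a reduced branch point) at which the Frobenius of the reduction falls in the prescribed conjugacy class, and the Beckmann-type commutation of specialization and reduction transfers this Frobenius to $\widehat{E_{t_0}}/k$.

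Finally, CRT combines the ramified congruence modulo $\prod_{\mathcal{P} \in \mathcal{S}_{\rm{ra}}} p_\mathcal{P}^{a_\mathcal{P}+1}$ coming from Lemma \ref{core} with the unramified congruences modulo $\prod_{\mathcal{P} \in \mathcal{S}_{\rm{ur}}} p_\mathcal{P}$ and the auxiliary congruences modulo $\beta$ into a single congruence modulo $\beta \cdot \prod_{\mathcal{P} \in \mathcal{S}_{\rm{ur}}} p_\mathcal{P} \cdot \prod_{\mathcal{P} \in \mathcal{S}_{\rm{ra}}} p_\mathcal{P}^{a_\mathcal{P}+1}$, giving the integer $\theta$ of the statement. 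The auxiliary Frobenius conditions force ${\rm{Gal}}(\widehat{E_{t_0}}/k_{\widehat{E}})$ to intersect each non-trivial conjugacy class of $\overline{G}$, and hence to equal $\overline{G}$; combined with $k$-regularity of $E/k(T)$ this gives condition (1), while conditions (2) and (3) follow from the two constructions above. The main obstacle is the quantitative Chebotarev step in the third paragraph --- producing the residues $\bar{t}_\mathcal{P}$ with the prescribed Frobenius uniformly in $\mathcal{P}$ under the explicit bound $p_\mathcal{P} \geq r^2|\overline{G}|^2$ --- which is the technical heart of \cite{DL12} and must be invoked rather than redone here.
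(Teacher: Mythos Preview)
Your proposal is correct and follows essentially the same route as the paper: auxiliary primes over $p_1,\dots,p_{{\bf cc}(\overline{G})}$ paired bijectively with the non-trivial conjugacy classes of $\overline{G}$ to force ${\rm Gal}(\widehat{E}_{t_0}/k_{\widehat{E}})=\overline{G}$ via Jordan, the results of \cite{DL12} (resting on the Weil bound under $p\ge r^2|\overline{G}|^2$ and total splitting of $p$ in $k_{\widehat{E}}/\mathbb{Q}$) for the Frobenius prescriptions at $\mathcal{S}_{\rm ur}$ and the auxiliary primes, Lemma \ref{core} for the intersection multiplicities at $\mathcal{S}_{\rm ra}$, and CRT to glue. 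The only organizational difference is that the paper applies Lemma \ref{core} prime-by-prime to obtain $\theta'_\mathcal{P}\in A$ and then uses condition (a) of \S5.1.3 (completion equals $\mathbb{Q}_{p_\mathcal{P}}$) to replace each $\theta'_\mathcal{P}$ by an integer before invoking CRT, whereas you apply Lemma \ref{core} to all of $\mathcal{S}_{\rm ra}$ at once; either way the passage from $A$ to $\mathbb{Z}$ needs the approximation argument you allude to via the hypothesis on ramification and residue degrees.
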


Theorem \ref{DGL} is proved in \S5.4.

\subsection{On the special case $k=\mathbb{Q}$} Below we assume that $k=\mathbb{Q}$ and denote the discriminant of a given finite extension $F/\mathbb{Q}$ by $d_F$.

\subsubsection{Bounds on discriminants} 

\begin{proposition} \label{DGL 2}
Let $P(T,Y) \in \mathbb{Z}[T][Y]$ be the minimal polynomial of a primitive element of $E/\mathbb{Q}(T)$, assumed to be
integral over $\mathbb{Z}[T]$. Denote the discriminant of the polynomial $P(T,Y)$
by $\Delta_P(T) \in \mathbb{Z}[T]$, the degree of $\Delta_P(T)$ by $\delta_P$ and its height\footnote{{\it{i.e.}} the maximum of the absolute values of the coefficients of $\Delta_P(T)$.} by $H(\Delta_P)$. Then, for at least one integer $t_0$ from the conclusion of Theorem \ref{DGL}, one has 
\begin{equation} \label{7}
|d_{E_{t_0}}| \leq (1+\delta_P)^{1+\delta_P} \cdot H(\Delta_P) \cdot \beta^{\delta_P} \cdot \Big(\prod_{p \in \mathcal{S}_{\rm{ur}}} p \cdot \prod_{p \in \mathcal{S}_{\rm{ra}}} p^{a_p + 1} \Big)^{\delta_P}.
\end{equation}
\end{proposition}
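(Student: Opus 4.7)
The plan is to combine two standard facts: that the field discriminant $d_{E_{t_0}}$ divides the polynomial discriminant $\Delta_P(t_0)$, and that within the arithmetic progression furnished by Theorem \ref{DGL} we are free to pick $t_0$ of small absolute value.

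For the first step, fix any integer $t_0$ with $t_0 \equiv \theta \pmod{M}$, where $M := \beta \cdot \prod_{p \in \mathcal{S}_{\rm{ur}}} p \cdot \prod_{p \in \mathcal{S}_{\rm{ra}}} p^{a_p+1}$. By part (1) of Theorem \ref{DGL} together with Lemma \ref{spec non gal}, the specialized polynomial $P(t_0,Y) \in \mathbb{Z}[Y]$ is irreducible over $\mathbb{Q}$; it is monic in $Y$ by hypothesis, so any root $\alpha$ of $P(t_0,Y)$ is an algebraic integer generating $E_{t_0}$ over $\mathbb{Q}$. Hence $\mathbb{Z}[\alpha]$ is an order in $\mathcal{O}_{E_{t_0}}$, and the standard discriminant identity
$$\Delta_P(t_0) \;=\; [\mathcal{O}_{E_{t_0}} : \mathbb{Z}[\alpha]]^{2} \cdot d_{E_{t_0}}$$
yields $|d_{E_{t_0}}| \leq |\Delta_P(t_0)|$.

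For the second step, since the progression $\theta + M\mathbb{Z}$ meets every interval of length $\geq M$, I can pick $t_0$ in it with $|t_0| \leq (1+\delta_P)\,M$. Writing $\Delta_P(T) = \sum_{i=0}^{\delta_P} a_i T^i$ with $|a_i| \leq H(\Delta_P)$, this choice yields
$$|\Delta_P(t_0)| \;\leq\; H(\Delta_P) \sum_{i=0}^{\delta_P} |t_0|^i \;\leq\; (1+\delta_P)\, H(\Delta_P) \max(1,|t_0|)^{\delta_P} \;\leq\; (1+\delta_P)^{1+\delta_P}\, H(\Delta_P)\, M^{\delta_P},$$
and substituting the value of $M$ produces inequality \eqref{7}.

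The argument is essentially routine, and I do not anticipate a genuine obstacle: the use of Theorem \ref{DGL} and Lemma \ref{spec non gal} is automatic, the discriminant identity is a standard piece of algebraic number theory, and selecting $t_0$ of controlled size in the progression is an immediate density observation. Perhaps the single step most worth spelling out is the verification that the chosen $t_0$ is genuinely not a branch point, so that the specialization $E_{t_0}/\mathbb{Q}$ is defined and $\Delta_P(t_0) \neq 0$; but this is delivered gratis by the very statement of Theorem \ref{DGL}.
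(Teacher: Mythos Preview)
Your approach is the same as the paper's, but there is a small gap in your final paragraph. You claim that $\Delta_P(t_0)\neq 0$ is ``delivered gratis'' by Theorem \ref{DGL}, i.e.\ that $t_0$ not being a branch point forces $\Delta_P(t_0)\neq 0$. This implication is false: the zero set of $\Delta_P$ contains the branch points but may contain other points as well, depending on the chosen primitive element. For instance, $Y^2-(T-1)^2(T+1)$ defines the same extension of $\Qq(T)$ as $Y^2-(T+1)$, with branch point set $\{-1,\infty\}$, yet $\Delta_P$ vanishes at $T=1$. Without $\Delta_P(t_0)\neq 0$ you can invoke neither Lemma \ref{spec non gal} (whose hypothesis is precisely that $P(t_0,Y)$ be separable) nor the discriminant identity (which needs $\Zz[\alpha]$ to be a full-rank order).

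The fix is immediate and is exactly what the paper does: among the $\delta_P+1$ integers $\theta,\ \theta+M,\ \dots,\ \theta+\delta_P M$ in the progression (with $1\leq\theta\leq M$), at most $\delta_P$ are roots of $\Delta_P$, so one of them works. Your bound $|t_0|\leq(1+\delta_P)M$ is already sized for this---indeed suspiciously so, since a bare density argument would give $|t_0|\leq M$---so the correction costs you nothing.
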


Proposition \ref{DGL 2} is proved in \S5.3.3.

\begin{remark} \label{DGL 3}
(1) Some lower bounds can also be given. Indeed, assume for example that, for each prime number $p \in \mathcal{S}_{\rm{ra}}$, the integer $a_p$ is not a multiple of the order of the elements of $C_{i_p}$. Then, for every specialization point $t_0$ from the conclusion of Theorem \ref{DGL}, each prime number $p \in \mathcal{S}_{\rm{ra}}$ ramifies in the specialization $E_{t_0}/\mathbb{Q}$. This provides $$\prod_{p \in \mathcal{S}_{\rm{ra}}} p \leq |d_{E_{t_0}}|.$$

In particular, we obtain some extra limitations on the ramification in our specializations. Namely, consider a specialization $E_{t_0}/\mathbb{Q}$ as in Proposition \ref{DGL 2} and denote the set of all prime numbers $p \not \in \mathcal{S}_{\rm{ra}}$ that ramify in $E_{t_0}/\mathbb{Q}$ by $\mathcal{S}'_{\rm{ra}}$. Then one has 
\begin{equation} \label{4}
\prod_{p \in \mathcal{S}_{\rm{ra}} \cup \mathcal{S}'_{\rm{ra}}} p \leq |d_{E_{t_0}}|.
\end{equation}
Combining \eqref{4} and \eqref{7} then provides
$$\prod_{p \in \mathcal{S}'_{\rm{ra}}} p \leq (1+\delta_P)^{1+\delta_P} \cdot H(\Delta_P) \cdot \beta^{\delta_P} \cdot \Big(\prod_{p \in \mathcal{S}_{\rm{ur}}} p^{\delta_P} \Big) \cdot \Big(\prod_{p \in \mathcal{S}_{\rm{ra}}} p^{a_p \delta_P + \delta_P-1}\Big).$$

Moreover, if the extension $E_{t_0}/\mathbb{Q}$ is Galois (in particular, if $E/\mathbb{Q}(T)$ is itself Galois), then \cite[\S1.4, Proposition 6]{Ser81} may be used to replace \eqref{4} by the better inequality $$\prod_{p \in \mathcal{S}_{\rm{ra}} \cup \mathcal{S}'_{\rm{ra}}} p ^{n/2} \leq |d_{E_{t_0}}|.$$ 

\noindent
(2) Similar bounds on the discriminant of the Galois closure $\widehat{E_{t_0}}/\mathbb{Q}$ can also be given as \cite[\S1]{Ser81} provides
$$|d_{E_{t_0}}| \leq |d_{\widehat{E_{t_0}}}| \leq |G|^{|G|} \cdot |d_{E_{t_0}}|^{|G| + |G|^2/2}.$$

\noindent
(3) The above bounds are in a sense the best possible as the following evidence suggests.

Assume that some branch point $t_i$ of the extension $E/\mathbb{Q}(T)$ is in $\Qq$. Let $m_0$ be a real number satisfying $m_0 > p_l$ for each $l \in \{1,\dots, {\bf{cc}}(\overline{G})\}$ and such that each prime number $p\geq m_0$ is a good prime for $E/\mathbb{Q}(T)$ unitizing $t_i$. As the prime numbers $p_1,\dots, p_{{\bf{cc}}(\overline{G})}$ depend only on the extension $E/\Qq(T)$, we may and will assume that the same holds for $m_0$. Given a real number $x \geq m_0$, apply Theorem \ref{DGL} with $\mathcal{S}_{\rm{ur}} = \emptyset$, $\mathcal{S}_{\rm{ra}}$ taken to be the set of all prime numbers $p \in [m_0,x]$ (this can be done as every prime number $p \geq m_0$ is a prime divisor of $m_{t_i}(T) \cdot m_{1/t_i}(T)$) and $a_p=1$ for each $p \in \mathcal{S}_{\rm{ra}}$. We then obtain an extension $E_{t_{0,x}}/\mathbb{Q}$ ramifying at each prime number $p \in [m_0,x]$ (and which also satisfies the remaining properties from the conclusion) and which by the above may be assumed to satisfy $$\prod_{p \in [m_0,x]} p \leq |d_{E_{t_{0,x}}}| \leq \gamma \cdot \prod_{p \in [m_0,x]} p^{2\delta}$$ with 
$\gamma = (1+\delta_P)^{1+\delta_P} \cdot H(\Delta_P) \cdot \beta^{\delta_P}$ and $\delta=\delta_P$. Hence one has 
$$\alpha \cdot \prod_{p \leq x} p \leq |d_{E_{t_{0,x}}}| \leq \gamma \cdot \prod_{p \leq x} p ^{2 \delta}$$ for some positive constants $\alpha$, $\gamma$ and $\delta$ depending only on $E/\mathbb{Q}(T)$. As $${\rm{log}} \, \Big(\prod_{p \leq x} p \Big) \,  \sim \, x ,\quad x \rightarrow \infty,$$ we get $$c_1 \, x \leq {\rm{log}} \, |d_{E_{t_{0,x}}}| \leq c_2 \, x$$ for some positive constants $c_1$ and $c_2$ depending only on $E/\mathbb{Q}(T)$ (and sufficiently large $x$).
\end{remark}

\subsubsection{Proof of Theorem 2}

Now, we explain how to obtain Theorem 2 from the introduction. Assume that the extension $E/\mathbb{Q}(T)$ has at least one $\Qq$-rational branch point $t_i$ and the Galois closure $\widehat{E}/\mathbb{Q}(T)$ is $\Qq$-regular. Up to applying a suitable change of variable, we may assume that $t_i \not=\infty$. Then all but finitely many prime numbers are prime divisors of $m_{t_i}(T) \cdot m_{1/t_i}(T)$ (as $t_i \in \Qq$) and condition (a) from \S5.1.2 only requires $p$ to be sufficiently large (as $\Qq_{\widehat{E}}=\Qq$). Then conditions (1), (2) and (3) in the conclusion of Theorem 2 follow from Theorem \ref{DGL} (applied with $d_{p,1}^1 \dots d_{p,s_p}^1=1^d$ for each prime number $p \in \mathcal{S}_{\rm{ur}}$ and $(a_p,i_p)=(1,i)$ for each prime number $p \in \mathcal{S}_{\rm{ra}}$) and the fact that $G=\overline{G}$. As to condition (4), it is a consequence of the bounds given in Proposition \ref{DGL 2} and in part (1) of Remark \ref{DGL 3}.

\subsubsection{Proof of Proposition \ref{DGL 2}}
Pick an integer $u \in [0, \delta_P]$ such that $$t_{0} = \theta + u \cdot \beta \cdot \prod_{p \in \mathcal{S}_{\rm{ur}}} p \cdot \prod_{p \in \mathcal{S}_{\rm{ra}}} p^{a_p+1}$$ is not a root of $\Delta_P(T)$ (with $\theta$ as in Theorem \ref{DGL}). As we may assume 
$$1 \leq \theta \leq \beta \cdot \prod_{p \in \mathcal{S}_{\rm{ur}}} p \cdot \prod_{p \in \mathcal{S}_{\rm{ra}}} p^{a_p+1},$$ one has 
\begin{equation} \label{5}
1 \leq |t_{0}| \leq (1+\delta_P) \cdot \beta \cdot \prod_{p \in \mathcal{S}_{\rm{ur}}} p \cdot \prod_{p \in \mathcal{S}_{\rm{ra}}} p^{a_p+1}.
\end{equation}
From condition (1) in the conclusion of Theorem \ref{DGL} and as $\Delta_P(t_0) \not=0$, the polynomial $P(t_0,Y)$ is irreducible over $\mathbb{Q}$ (Lemma \ref{spec non gal}). As it is monic and has coefficients in $\mathbb{Z}$, its discriminant $\Delta_P(t_0)$ is a multiple of $d_{E_{t_0}}$. Hence one has 
\begin{equation} \label{6}
|d_{E_{t_0}}| \leq |\Delta_P(t_0)| \leq (1+ \delta_P) \cdot H(\Delta_P) \cdot |t_0|^{\delta_P}.
\end{equation}
(as $|t_0| \geq 1$). It then remains to combine \eqref{5} and \eqref{6} to get \eqref{7}, thus ending the proof.

\subsection{Proof of Theorem \ref{DGL}}
First, we recall how \cite{DL12} handles condition (2) from the conclusion. Given a prime ideal $\mathcal{P} \in \mathcal{S}_{\rm{ur}}$, denote the order of $g_\mathcal{P}$ by $e_\mathcal{P}$ and, for simplicity, denote the residue characteristic of $\mathcal{P}$ by $p$. Let $F^{p,e_\mathcal{P}}/\mathbb{Q}_p$ be the unique unramified Galois extension of $\mathbb{Q}_p$ of degree $e_\mathcal{P}$, given with an isomorphism ${\rm{Gal}}(F^{p,e_\mathcal{P}}/\mathbb{Q}_p) \rightarrow \, \langle g_\mathcal{P} \rangle$ mapping the Frobenius of the extension $F^{p,e_\mathcal{P}}/\mathbb{Q}_p$ to $g_\mathcal{P}$. Let $\varphi: {\rm{G}}_{\mathbb{Q}_p} \rightarrow \,  \langle g_\mathcal{P} \rangle$ be the corresponding epimorphism (with ${\rm{G}}_{\mathbb{Q}_p}$ the absolute Galois group of $\Qq_p$). As $p \geq r^2|\overline{G}|^2$ and $\mathcal{P}$ is a good prime for $E/k(T)$, we may use \cite{DL12} to get that there exists an integer ${\theta_\mathcal{P}}$ that satisfies the following property. For every integer $t$ satisfying $t \equiv {\theta_\mathcal{P}} \, \, {\rm{mod}} \, \, p$, $t$ is not a branch point and 

\noindent
- the specialization of $\widehat{E}\mathbb{Q}_p/\mathbb{Q}_p(T)$ at $t$ corresponds to the epimorphism $\varphi$ (note that $k_{\widehat{E}}\mathbb{Q}_p = \mathbb{Q}_p$ as $p$ has been assumed to be totally split in the extension $k_{\widehat{E}}/\mathbb{Q}$),

\noindent
- the specialization algebra of ${E}\mathbb{Q}_p/\mathbb{Q}_p(T)$ at $t$ is equal to $\prod_{l=1}^{s_\mathcal{P}} F^{p,d_{\mathcal{P},l}}/ \mathbb{Q}_p$ where $F^{p,d_{\mathcal{P},l}}/ \mathbb{Q}_p$ denotes the unique unramified extension of $\mathbb{Q}_p$ of degree $d_{\mathcal{P}, l}$.

Given $l \in \{1, \dots, {\bf{cc}}(\overline{G})\}$, pick a prime ideal $\mathcal{P}_l$ of $A$ lying over $p_l$ in the extension $k/\Qq$ and associate a non-trivial conjugacy class $C_l$ of $\overline{G}$. We may and will assume that the map $l \mapsto C_l$ is a bijection between the set $\{1, \dots, {\bf{cc}}(\overline{G})\}$ and the set of all non-trivial conjugacy classes of $\overline{G}$. For each $l$, \cite{DL12} provides as above an integer ${\theta_l}$ that satisfies the following. For each integer $t$ satisfying $t \equiv {\theta_l} \, \, {\rm{mod}} \, \, p_l$, $t$ is not a branch point and the Galois group of the specialization of $\widehat{E}\mathbb{Q}_{p_l}/\mathbb{Q}_{p_l}(T)$ at $t$ is conjugate in $\overline{G}$ to an element of $C_l$.

Given a prime ideal $\mathcal{P} \in {\mathcal{S}}_{\rm{ra}}$, denote the residue characteristic by $p$. From Lemma \ref{core}, the fact that $t_{i_\mathcal{P}} \not= \infty$ and as $\mathcal{P}$ is unramified in the extension $k/\Qq$ (condition (a) from \S5.1.3), there exists an element ${\theta'_\mathcal{P}} \in A$ such that 
\begin{equation} \label{28}
I_{\mathcal{P}}(\theta'_\mathcal{P} + u \cdot p^{a_\mathcal{P}+1}, t_{i_\mathcal{P}})=a_\mathcal{P}
\end{equation}
for every $u \in A_\mathcal{P}$. We claim that $\theta'_\mathcal{P}$ may be chosen in $\Zz$. Indeed, by the full condition (a) from \S5.1.3, the completion of $k$ with respect to $\mathcal{P}$ is equal to $\Qq_p$. Viewing $\theta'_\mathcal{P}$ as an element of $\mathbb{Z}_p$, pick an integer $\theta''_\mathcal{P}$ such that 
\begin{equation} \label{29}
v_\mathcal{P}(\theta'_\mathcal{P} - \theta''_\mathcal{P}) \geq a_\mathcal{P}+1.
\end{equation}
Let $u \in A_\mathcal{P}$. As $v_\mathcal{P}(\theta'_\mathcal{P} + u \cdot p^{a_\mathcal{P}+1}) \geq 0$, one has
\begin{equation} \label{30}
I_{\mathcal{P}}(\theta'_\mathcal{P} + u \cdot p^{a_\mathcal{P}+1}, t_{i_\mathcal{P}}) = v_\mathcal{P}(m_{t_{i_\mathcal{P}}}(\theta'_\mathcal{P} + u \cdot p^{a_\mathcal{P}+1})).
\end{equation}
By \eqref{28} and \eqref{30}, we get
\begin{equation} \label{31}
v_\mathcal{P}(m_{t_{i_\mathcal{P}}}(\theta'_\mathcal{P} + u \cdot p^{a_\mathcal{P}+1})) =a_\mathcal{P}.
\end{equation}
Then combining \eqref{29} and \eqref{31} provides $$v_\mathcal{P}(m_{t_{i_\mathcal{P}}}(\theta''_\mathcal{P} + u \cdot p^{a_\mathcal{P}+1})) =a_\mathcal{P},$$
{\it{i.e.}} $$I_{\mathcal{P}}(\theta''_\mathcal{P} + u \cdot p^{a_\mathcal{P}+1}, t_{i_\mathcal{P}})=a_\mathcal{P},$$ (as $v_\mathcal{P}(\theta''_\mathcal{P} + u \cdot p^{a_\mathcal{P}+1}) \geq 0$), thus proving our claim. From now on, we assume that $\theta'_\mathcal{P}$ lies in $\mathbb{Z}$. In particular, one has $I_{\mathcal{P}}(t, t_{i_\mathcal{P}})=a_\mathcal{P}$ for every integer $t$ satisfying $t \equiv \theta'_\mathcal{P} \, \, {\rm{mod}} \, \, p^{a_\mathcal{P}+1}$.

Next, use the Chinese remainder theorem to find some integer $\theta$ that satisfies the following three conditions:

\noindent
- $\theta \equiv \theta_\mathcal{P} \,  \, {\rm{mod}} \, \, p_\mathcal{P}$ for each prime ideal $\mathcal{P} \in \mathcal{S}_{\rm{ur}}$, 

\noindent
- $\theta \equiv \theta_l \,  \, {\rm{mod}} \, \, p_l$ for each $l \in \{1, \dots, {\bf{cc}}(\overline{G})\}$,

\noindent
- $\theta\equiv {\theta'_\mathcal{P}} \, \,   {\rm{mod}} \, \, p_\mathcal{P}^{a_\mathcal{P}+1}$ for each prime ideal $\mathcal{P} \in \mathcal{S}_{\rm{ra}}$. 

\noindent
Let $t_0$ be an integer that satisfies $$t_0 \equiv \theta \,  \, {\rm{mod}} \, \, \beta \cdot \prod_{\mathcal{P} \in \mathcal{S}_{\rm{ur}}} p_\mathcal{P} \cdot \prod_{\mathcal{P} \in \mathcal{S}_{\rm{ra}}} p_\mathcal{P}^{a_\mathcal{P}+1}.$$ 
In  particular, $t_0$ is not a branch point of the extension $E/k(T)$ (as ${\bf{cc}}(\overline{G}) \geq 1$). By the conclusion on the prime ideals $\mathcal{P}_1, \dots, \mathcal{P}_{{\bf{cc}}(\overline{G})}$ and \cite{Jor72}, one has ${\rm{Gal}}(\widehat{E}_{t_0}/k_{\widehat{E}})=\overline{G}$. As the extension $E/k(T)$ has been assumed to be $k$-regular, its specialization algebra at $t_0$ then consists of a single degree $n$ extension $E_{t_0}/k$. Hence condition (1) in the conclusion holds. Condition (2) follows from the above conclusion of the primes in $\mathcal{S}_{{\rm{ur}}}$. Finally, one has $I_{\mathcal{P}}(t_0, t_{i_\mathcal{P}}) = a_\mathcal{P}$ for each prime ideal $\mathcal{P} \in \mathcal{S}_{\rm{ra}}$. Combining this and the condition ${\rm{Gal}}(\widehat{E}_{t_0}/k_{\widehat{E}})=\overline{G}$ provides condition (3) (as explained in \S4.3.1), thus ending the proof.

\bibliography{Biblio2}
\bibliographystyle{alpha}

\end{document}